\documentclass[a4paper,12pt]{article}
\usepackage[top=1.25in,left=1in,right=1in,bottom=1.25in]{geometry}           
\usepackage{latexsym}
\usepackage{epsfig, ecltree, epic, eepic}
\usepackage{enumerate,amsmath,amssymb,dsfont,pifont,amsthm}
\usepackage{graphicx}
\usepackage[arrow, matrix, curve]{xy}
\usepackage{pstricks}
\usepackage{pst-grad} 
\usepackage{pst-plot} 

\pagestyle{headings}
\numberwithin{equation}{section}

\setlength{\parskip}{6pt}
\setlength{\parindent}{0pt}
\parindent 0.0cm
\sloppy
\frenchspacing

\theoremstyle{plain}
\newtheorem{theorem}{Theorem}[section]
\newtheorem{corollary}[theorem]{Corollary}
\newtheorem{lemma}[theorem]{Lemma}
\newtheorem{proposition}[theorem]{Proposition}

\theoremstyle{definition}
\newtheorem{definition}[theorem]{Definition}

\theoremstyle{remark}
\newtheorem{remark}[theorem]{Remark}

\newtheorem{example}[theorem]{Example}

\newcommand{\bbr}{\mathbb{R}}
\newcommand{\bbq}{\mathbb{Q}}
\newcommand{\bbp}{\mathbb{P}}
\newcommand{\bbe}{\mathbb{E}}
\newcommand{\bbf}{\mathbb{F}}
\newcommand{\bbn}{\mathbb{N}}

\newcommand{\cb}{\mathcal{B}}
\newcommand{\cc}{\mathcal{C}}
\newcommand{\cf}{\mathcal{F}}

\newcommand{\pf}{\longrightarrow}

\newcommand{\abs}[1]{\left| #1 \right|}
\newcommand{\norm}[1]{\left\| #1 \right\|}


\newcommand{\gdw}{\Leftrightarrow}

\DeclareMathOperator{\sign}{sign}


\begin{document}

\allowdisplaybreaks

\title{\bfseries On Deterministic Markov Processes: Expandability and Related Topics}

\author{%
    \textsc{Alexander Schnurr}%
    \thanks{Lehrstuhl IV, Fakult\"at f\"ur Mathematik, Technische Universit\"at Dortmund,
              D-44227 Dortmund, Germany,
              \texttt{alexander.schnurr@math.tu-dortmund.de}, 
              phone: +49-231-755-3099, fax: +49-231-755-3064.}
    }

\date{\today}

\maketitle
\begin{abstract}
We analyze the class of universal Markov processes on $\bbr^d$ which do not depend on random. For this, as well as for several subclasses, we prove criteria whether a function $f:[0,\infty[\to\bbr^d$ can be a path of a process in the respective class. This is useful in particular in the construction of (counter-)examples. The semimartingale property is characterized in terms of the jumps of a one-dimensional deterministic Markov process. We emphasize the differences between the time homogeneous and the time inhomogeneous case and we show that a deterministic Markov process is in general more complicated than a Hunt process plus `jump structure'.  
\end{abstract}

\emph{MSC 2010:} 60J25 (primary), 26A30, 26A45, 60J35, 60J75, 47G30 (secondary)

\emph{Keywords:} Markov semimartingale, deterministic process, It\^o process, Feller process, symbol, expandability

\section{Introduction}
Deterministic processes arise naturally in several parts of the theory of stochastic processes. Examples include the space dependent drift being one part of a Feller process or the seasonal component of a time series in continuous time. Furthermore it is known that every PII (i.e. a c\`adl\`ag process with independent increments) can be written as the sum of a PII semimartingale and a deterministic process (cf. \cite{jacodshir} Theorem II.5.1).

When starting our investigation of deterministic universal Markov processes we had the following three questions in mind:\\
I) Is there a simple example of a Hunt semimartingale which is not an It\^o process? \\
II) Can we characterize every (one-dim.) deterministic Markov process? \\
III) Given a function $f:[0,\infty[ \to \bbr^d$. Can we directly say whether there exists a deterministic process of a certain class having this function as a path?  

In our recent paper \cite{detmp1} we have solved the first question by introducing the Cantor process (cf. Example \ref{ex:cantorprocess}). Furthermore we gave an affirmative answer to the second question in the special case of Hunt processes. In the present paper we treat the general case of deterministic Markov processes. Several results which are true for Hunt processes turn out to be wrong in this more general framework. For $d=1$ paths do not have to be monotone any more or even of finite variation. This leads to new questions like: when is a deterministic Markov process a semimartingale? The criterion is simple, while the proof is surprisingly difficult. A new technique had to be developed in order to prove it. 

Here and in the following we mean by a \emph{deterministic process} a stochastic process $(X,\bbp^x)_{x\in\bbr^d}=(X^x)_{x\in\bbr^d}$ which does not depend on $\omega$, i.e. there exists a function $f:\bbr^d \times [0,\infty[ \to \bbr^d$ such that 
\[
X_t^x(\omega)=f(x,t)
\]
for every $\omega\in\Omega$. Since it is always assumed that $\bbp^x(X_0=x)=1$ we write $X^x$ for the simple process $(X,\bbp^x)$ and call it a \emph{path} of the process. Since deterministic processes are adapted to every possible filtration, we do not mention it further but we assume that a fixed stochastic basis $(\Omega, \cf, \bbf=(\cf_t)_{t\geq 0}, \bbp)$ is always in the background. In constructing examples it is an advantage of deterministic processes that one does not have to care about the filtration. 

We are concerned with \emph{Markov processes} in the sense of Blumenthal and Getoor (cf. \cite{blumenthalget}) which are sometimes called universal Markov processes (cf. \cite{bauerwt}, \cite{niels3}) or Markov families. This is due to the fact that the examples we have in mind are related to concepts like the semigroup of the process, the generator or the martingale problem. With little effort most of the results can be transformed to the case of simple Markov processes (with only one starting point). The Markov processes $X=(X_t)_{t\geq 0}$ we are treating here are allowed to start in every point of the respective state space and furthermore, time homogeneity is present, i.e.,  
writing for $s,t\geq 0$, $x,y,z \in\bbr^d$ and a Borel set $B$ in $\bbr^d$ $P_{s,t}^x(z,B):=\bbp^x(X_{t}\in B | X_s=z)$ we have
\begin{align} \label{timehomone}
P_{t-s}(z,B):=P_{s,t}^x(z,B)=P_{s+h,t+h}^y(z,B), \hspace{10mm} h\geq 0.
\end{align}
For a deterministic Markov process, time homogeneity can be characterized as follows: if there exist $s,t\geq 0$ and $x,y\in\bbr^d$ such that $X_s^x=X_t^y$ we obtain
\begin{align} \label{timehom}
  X_{s+h}^x = X_{t+h}^y
\end{align}
for every $h\geq 0$. 
The class of deterministic Markov processes serves as a rich source of counterexamples (cf. \cite{detmp1} and the present paper). Furthermore they can be used as (parts of the) mean processes in stochastic volatility models or as interesting mathematical objects in their own right.

Since the definitions and notations for some of the classes of processes we are treating are not unified, let us first fix some terminology: a Markov process in the above sense, i.e. satisfying \eqref{timehomone}, is called \emph{Hunt process} if it is quasi-left continuous (cf. Definition I.2.25 of \cite{jacodshir}) with respect to every $\bbp^x$ $(x\in\bbr^d)$. For a deterministic process this is equivalent to ordinary left continuity of the paths. 

We say that a process $(X,\bbp^x)_{x\in\bbr^d}$ is a \emph{semimartingale}, if $X^x$ is one for every $x\in\bbr^d$.
A Markov semimartingale $X$ is called \emph{It\^o process} (cf. \cite{vierleute} and \cite{cinlarjacod81}) if it has characteristics $(B,C,\nu)$ of the form:
\begin{align*}
B_t^{(j)}(\omega)&=\int_0^t \ell^{(j)}(X_s(\omega)) \ ds & j=1,...,d\\
C_t^{jk} (\omega)&=\int_0^t Q^{jk} (X_s(\omega)) \ ds &j,k=1,...,d\\
\nu(\omega;ds,dy)&=N(X_s(\omega),dy)\ ds 
\end{align*}
where $\ell^{(j)},Q^{jk}:\bbr^d \pf \bbr$ are measurable functions, $Q(x)=(Q^{jk}(x))_{1\leq j,k \leq d}$ is a positive semidefinite matrix for every $x\in\bbr^d$, and $N(x,\cdot)$ is a Borel transition kernel on $\bbr^d \times \cb(\bbr^d \backslash \{0\})$.

The following diagram gives an overview on the interdependence of the classes of processes we are treating here:
\begin{align*}
\begin{array}{cccccccccc}
    \begin{array}{c}\text{rich} \\ \text{Feller} \end{array}& \subset& \text{It\^o}& \subset  & \begin{array}{c}\text{Hunt} \\ \text{ semimartingale} \end{array} &  & \subset & \begin{array}{c}\text{Markov} \\ \text{ semimartingale} \end{array}
         &\subset  & \text{semimartingale}     \\
                  \rule[5mm]{0mm}{0mm}    \cap & &  & & \cap & & & \cap \\
                  \text{Feller} & & \subset  &\rule[5mm]{0mm}{0mm} & \text{Hunt} & &\subset & \text{Markov}&
\end{array}
\end{align*}
In the present paper we are not too much concerned with Feller processes (only in Section 2, where we recall the definition). This is because we are interested in processes which exhibit jumps. A deterministic jump is a fixed time of discontinuity which Feller processes do not have.

We start with a simple construction principle which we proposed in \cite{detmp1} and which will be generalized in Section 3 below: 
\begin{example} \label{ex:startingpoint}
Let $R\subseteq \bbr$ and $\Phi:\bbr\to R$ be bijective and such that $\Phi(0)=0$. In this case a Markov process on $R$ is given by
\begin{align} \label{construction}
X_t^x(\omega):=\Phi(t+\Phi^{-1}(x)),\hspace{10mm} \text{ for every } \omega \in \Omega,
\end{align}
i.e. by shifting the function $\Phi$ to the left and to the right. By inverting the function $x\mapsto \Phi(t+\Phi^{-1}(x))$ we know where a path being at time $t$ in $z\in R$ has started at time zero: $x=\Phi(\Phi^{-1}(z)-t)$.

The function $\Phi$ will be called a generating path of the process $X$ since it contains all the information of the process on $R$ (cf. Definition \ref{def:genpath} below). Obviously the restriction $\Phi(0)=0$ is not needed and any shifted generating path $t\mapsto \Phi(t-s)$ ($s\in \bbr$) would have served as well. In general one needs infinitely many generating paths in order to describe a deterministic Markov process on $\bbr^d$.
\end{example}

Let us have a look at a first example which emphasizes how rude a process can be if we do not assume any regularity of the paths, though the $\Phi$ is even bijective on $\bbr$, i.e. the process is given by a single generating path.
\begin{example}
Let $\Phi(t):=(t+1) 1_{\bbr \backslash \bbq} (t) + t 1_\bbq (t)$ and use the construction principle described in Example \ref{ex:startingpoint}: we obtain the deterministic Markov process on $\bbr$
\[
X_t^x=\begin{cases} t+x+1 & \text{if } x\in \bbq \text{ and }    x+t\notin\bbq \\
                    t+x   & \text{if } x\in \bbq \text{ and }    x+t \in \bbq \\ 
                    t+x   & \text{if } x\notin \bbq \text{ and } x+t\notin\bbq \\
                    t+x-1 & \text{if } x\notin \bbq \text{ and } x+t \in \bbq.
      \end{cases}              
\]
For this process every path $t\mapsto X_t^x$ is discontinuous in \emph{every} point. The image of every path is dense in $[x-1,\infty[$.
\end{example}

In the deterministic world \emph{homogeneity in space} can be written as follows: 
\begin{align} \label{spacehom}
\text{for every } t\geq 0 \text{ and } x,z\in\bbr^d \text{ we have } X_t^{x+z}=X_t^x+z
\end{align}
The process above is not homogeneous in space since
\[
X_{\sqrt{2}}^{-\sqrt{2}} +\sqrt{2}=\sqrt{2} - 1 \neq \sqrt{2} + 1 =X_{\sqrt{2}}^0 
\]
but it is a natural question whether there are Markov processes which are homogeneous in space and time, but which are not L\'evy processes. It turns out that the axiom of choice is needed in order to construct such processes. We have not found a proof for the following result in the literature, but somehow it belongs to the folklore of our subject. The proof it not too difficult and hence left to the reader (compare in this context \cite{hamel}). 

\begin{proposition}
Let $X$ be a deterministic Markov process. $X$ is homogeneous in time \emph{and} space, iff $t\mapsto X_t^0$ is a $\bbq$-homomorphism on the $\bbq$-vector space $\bbr$ and the other paths are given by $X_t^x=X_t^0+x$. 
\end{proposition}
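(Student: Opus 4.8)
The plan is to fix the notation $g(t):=X_t^0$ and to show that the two asserted properties on the right-hand side are just restatements of \eqref{spacehom} and \eqref{timehom} once one feeds in the initial condition $X_0^x=x$. I would first dispose of the space-homogeneity part, which is essentially bookkeeping: applying \eqref{spacehom} with base point $0$ and shift $z=x$ gives $X_t^x=X_t^0+x=g(t)+x$, so in the direction ``$\Rightarrow$'' the representation $X_t^x=X_t^0+x$ is immediate, and conversely this same representation trivially yields \eqref{spacehom} since $X_t^{x+z}=g(t)+x+z=(g(t)+x)+z=X_t^x+z$. I would also record at the outset that a map on $[0,\infty[$ satisfying the additivity (Cauchy) equation $g(s+t)=g(s)+g(t)$ is exactly the restriction of a $\bbq$-linear map of the $\bbq$-vector space $\bbr$: additivity forces $g(0)=0$ and, by the standard induction, $\bbq_{\geq 0}$-homogeneity $g(qt)=qg(t)$, while the extension $g(-t):=-g(t)$ produces a genuine $\bbq$-homomorphism. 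Thus the real content of the proposition is the equivalence ``time homogeneity $\Leftrightarrow$ additivity of $g$'', granted space homogeneity.

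For the direction ``$\Rightarrow$'' I would extract additivity from \eqref{timehom} by exploiting the initial condition. The observation to make is that the state reached at time $s$ from $0$ is itself a starting point: since $X_0^{g(s)}=g(s)=X_s^0$, the two pairs $(s,0)$ and $(0,g(s))$ satisfy the hypothesis $X_s^0=X_0^{g(s)}$ of \eqref{timehom}. Applying \eqref{timehom} with $h$ arbitrary then slides the clock on both sides and gives $X_{s+h}^0=X_h^{g(s)}$ for every $h\geq 0$. Combining this with the already-established space homogeneity $X_h^{g(s)}=X_h^0+g(s)=g(h)+g(s)$ yields $g(s+h)=g(s)+g(h)$, i.e. the Cauchy equation on $[0,\infty[$, which is the desired $\bbq$-homomorphism property.

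For the converse ``$\Leftarrow$'' I would simply verify the characterization \eqref{timehom} directly from additivity. Assuming $X_t^x=g(t)+x$ with $g$ additive, suppose $X_s^x=X_t^y$, i.e. $g(s)+x=g(t)+y$. Splitting $g(s+h)=g(s)+g(h)$ and $g(t+h)=g(t)+g(h)$ gives $X_{s+h}^x=g(s)+g(h)+x=g(t)+g(h)+y=X_{t+h}^y$ for all $h\geq 0$, which is exactly time homogeneity; space homogeneity was already noted above, and $g(0)=0$ guarantees consistency with $X_0^x=x$.

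This argument is short, and I do not expect a serious obstacle; the paper's remark that it is ``left to the reader'' is accurate. The only genuinely non-mechanical step is, in the ``$\Rightarrow$'' direction, spotting that one should test \eqref{timehom} on the two coincident points $X_s^0=X_0^{g(s)}$ in order to convert a statement about sliding start points into the additive splitting of $g$. It is worth emphasizing separately, though it is not part of this equivalence, that the interest of the proposition lies in the fact that additive solutions of the Cauchy equation on $\bbr$ need not be linear; the non-linear ones are non-measurable and are produced only with the help of a Hamel basis, hence the axiom of choice, and these are precisely the homogeneous deterministic Markov processes that fail to be L\'evy processes.
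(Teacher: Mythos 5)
Your proof is correct, and since the paper explicitly leaves this proposition to the reader (with only a pointer to Hamel), your argument is precisely the intended one: read off the representation $X_t^x=X_t^0+x$ from space homogeneity, then test \eqref{timehom} on the coincidence $X_s^0=X_0^{g(s)}$ to obtain the Cauchy equation for $g(t)=X_t^0$, with the converse being a direct verification. The extension of the additive $g$ from $[0,\infty[$ to a $\bbq$-homomorphism on all of $\bbr$ is handled correctly as well.
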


Deterministic L\'evy processes form a one-dimensional subspace of the infinite-dimensional $\bbq$-vector space of deterministic Markov processes which are homogeneous in space and time.

\begin{example}
In order to get a concrete example of a deterministic Markov process which is homogeneous in space and time but not a L\'evy process, let $B= \{1\}\cup \{b_i:i\in I \} $ be (an uncountable) algebraic $\bbq$-basis of $\bbr$ and set $\Phi(1)=1$, $\Phi(b_i)=2b_i$ for every $i\in I$. The $\Phi$ defined as such is bijective since for every $y\in\bbr$ written as 
\[
q_0 1 + q_1 b_1 + ...+ q_n b_n
\]
we obtain 
\[
q_0 1 + \frac{q_1}{2}b_1 + ... + \frac{q_n}{2} b_n
\]
as its pre-image under $\Phi$.
\end{example}

In order to avoid pathological examples as above we will assume in the following that the processes we encounter are \emph{c\`adl\`ag}.

Let $(X^x)_{x\in\bbr^d}$ be a deterministic Markov process and let $x\in\bbr^d$. Immediately by \eqref{timehom} we obtain: if there exists $t_0<t_1$ such that $X_{t_0}^x=X_{t_1}^x$ then $X_{t_0+h}^x=X_{t_1+h}^x$ for every $h\geq 0$, i.e. if a path returns to a point, it has visited before, it becomes periodic. In order to describe $X^x$ and every path starting in the range $R$ of $X^x$ one could use $\Phi(t):=X_t^x$. This shows that we can be a bit more general by allowing generating paths which do become periodic in a certain sense. This is analyzed in detail in Section 3 where we solve our initial question III). While in the Hunt case a countable number of generating paths was sufficient, we show that in general an uncountable number of generating paths is needed in the presence of jumps. The case of `well behaved' paths leads to a different perspective on the classification theorem for Hunt processes (cf. \cite{detmp1} Theorem 2.11).

Let us give a brief outline on how the paper is organized: in the subsequent section we treat the question of expandability, i.e. given a path, does there exist an element of a certain class of processes containing this path. Section 3 deals with the general structure of such a process while in Section 4 we prove a criterion when a deterministic Markov process is a semimartingale. Section 5 is devoted to the more general case of time inhomogeneous processes. Some examples complementing those given in the text are collected in Section 6. The so called space-time Markov process is included here. Our main results are Theorem \ref{thm:markovexp} and the closely related Theorems \ref{thm:semimg} and \ref{thm:analysis}.

Most of the notation we are using is more or less standard. Vectors are column vectors and $'$ denotes a transposed vector or matrix. The $j$-th entry of the vector $v$ is $v^{(j)}$. For a set $A\subseteq\bbr$ and $n\in\bbn$ we write $A+n:=\{r\in\bbr:r=a+n \text{ for an } a\in A\}$. Note that we prefer to write $]s,t[$ for an open interval rather than $(s,t)$ and use the same convention for semi-open intervals. In the context of semimartingales we follow mainly \cite{jacodshir}, in the context of Feller processes \cite{revuzyor} and \cite{niels3}.

\section{Expandability}

In the construction of counterexamples one is often concerned with only one path rather than with a universal process, i.e. one considers a single starting point $x$ and constructs a path which meets certain requirements. However, one is still interested if this path can really be a part of a process in the desired class. It is important for us to consider universal processes, since the definition of e.g. a Feller process does not make sense otherwise. \\
In this section we answer the question whether a given right-continuous function $f:[0,\infty[\to \bbr^d$ is a path of a deterministic Markov process or of one of its subclasses. By time homogeneity one automatically knows how the process has to behave starting from any point within the range of $f$. Outside of that range one can usually set $X_t^x=x$ for every $t\geq 0$. The only exception to this rule are Feller processes (see below).

\begin{definition}
A right-continuous function $f:[0,\infty[\to \bbr^d$ is \emph{expandable} to an element of a certain class of processes $\cc$ if there exists a deterministic process $X\in\cc$ such that $X^{f(0)}_t=f(t)$ for every $t\geq 0$. We write $\cc$-expandable for short.
\end{definition}

By the same reasoning as in the proof of Theorem 2.7 of \cite{detmp1} we obtain that a path of a \emph{one-dimensional} deterministic Markov process can not move continuously to a point it has visited before, i.e. either the function becomes constant at a time $t_1$ or it jumps to a point it has visited before. The two cases are illustrated below.
\begin{center}
\includegraphics[width=5cm, angle=270]{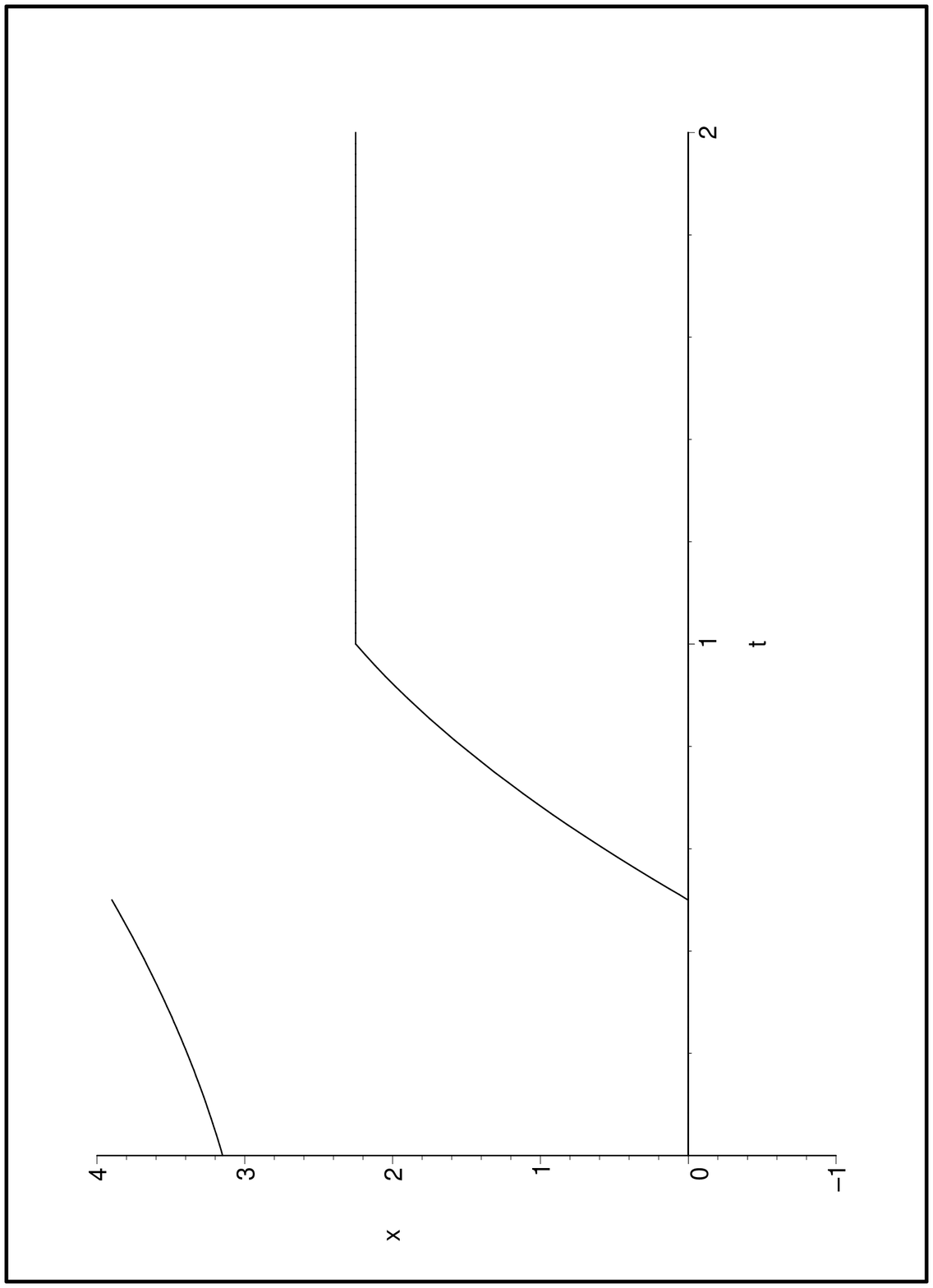}
\includegraphics[width=5cm, angle=270]{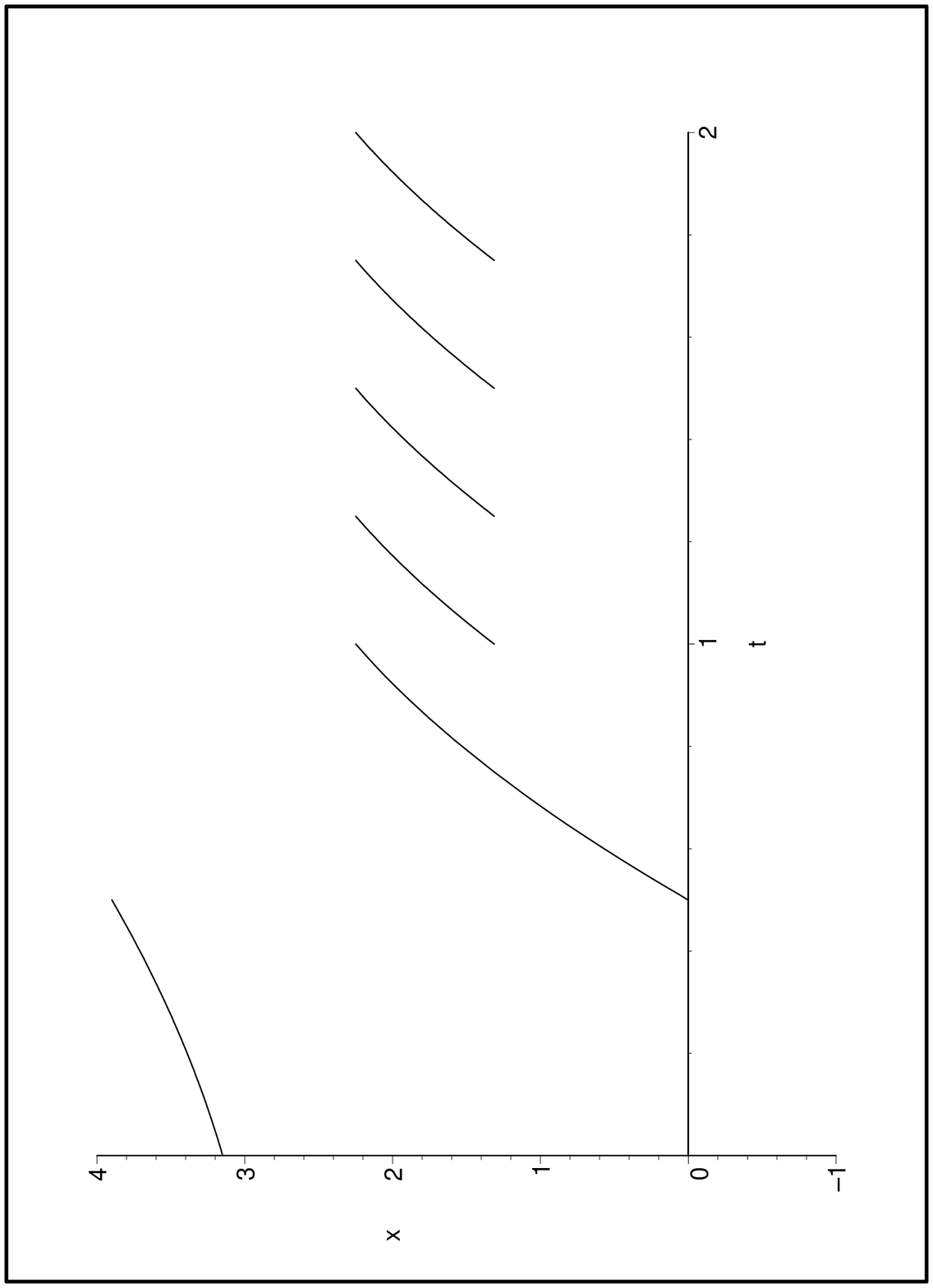}
\end{center}
Therefore, the key is the following kind of periodicity.

\begin{definition} \label{def:tzero}
Let $f:[0,\infty[\to\bbr^d$ be right-continuous. We define
\[
t_1:=\inf\{t\geq 0 : \text{ there exists } s<t \text{ such that } f(s)=f(t)\}
\] 
and
\[
t_0:=\inf\{s\geq 0 : \text{ there exists } t>s \text{ such that } f(s)=f(t)\}
\]
(a) $f$ is called \emph{jump-periodic} if the following holds: $t_0<t_1$ and for every $u\geq t_1$ we have $f(u)=f(u-(t_1-t_0))$. \\
(b) $f$ is called \emph{finally constant}, if there exists a $t_0\geq 0$ such that $f$ is injective on $[0,t_0[$ and constant on $[t_0,\infty[$. (Hence $t_0=t_1$)\\
(c) Let $f$ be jump-periodic. The function $f^{\leftarrow}: f([0,\infty[) \to [0,t_1[$ given by $f^{\leftarrow}(y):=\inf\{t\geq 0 : f(t)=y\}$ is called \emph{generalized inverse} of $f$.
\end{definition} 

\begin{remark}
(a) In the finally constant case one could speak of a period of length zero, in the jump-periodic case of length $t_1-t_0$. If a path never visits a point it has visited before we obtain a period of length infinity. \\
(b) Not every periodic function is Markov-expandable (e.g. $t\mapsto \sin(t)$). However, for a jump-periodic function $f$ the shifted function $t\mapsto f(t-t_0)$ is periodic. \\
(c) By the right-continuity of the function all infima of the above definition are minima, except for $\inf \emptyset = \infty$. \\ 
(d) The notions finally constant and jump-periodic are transfered easily to generating paths, i.e. functions on intervals $I\subseteq \bbr$ (see Definition \ref{def:genpath} below). Obviously this makes only sense if the right endpoint of $I$ is $\infty$.\\
(e) The notion jump-periodic might be misleading in two or more dimensions, because there does not have to be a jump in order to reach a point the process has visited before. An example of this kind is given by $f(t):=(\sin(t),\cos(t))'$. 
\end{remark}

\begin{theorem} \label{thm:markovexp}
A function $f:[0,\infty[\to \bbr^d$ is Markov-expandable iff it is injective or finally constant or jump-periodic. 
\end{theorem}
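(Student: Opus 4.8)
The plan is to prove the two implications separately, treating the ``$\Leftarrow$'' direction by explicit construction and the ``$\Rightarrow$'' direction by extracting the periodic structure forced by \eqref{timehom}.

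For ``$\Leftarrow$'' I would exhibit, in each of the three cases, a deterministic Markov process extending $f$, using that by \eqref{timehom} a path is completely determined on the whole range $f([0,\infty[)$ once prescribed along $f$. Concretely, for $y\in f([0,\infty[)$ I would set $X^y_t:=f(f^{\leftarrow}(y)+t)$ (reading $f^{\leftarrow}(y)=\inf\{s:f(s)=y\}$ as in Definition \ref{def:tzero}(c), resp.\ the unique preimage in the injective case) and $X^y_t:=y$ for $y$ outside the range, exactly as in Example \ref{ex:startingpoint}. The only point to check is that this family satisfies the time-homogeneity condition \eqref{timehom}, which reduces to the implication $f(a)=f(b)\Rightarrow f(a+h)=f(b+h)$ for all $a,b$ in the range and $h\ge0$ (the cases where one of the two points lies outside the range are trivial, the range being forward-invariant). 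For injective $f$ this is vacuous; for jump-periodic $f$ it follows from the two structural facts that $f$ is injective on $[0,t_1[$ (immediate from the definition of $t_1$) and periodic with period $t_1-t_0$ on $[t_0,\infty[$, so that any coincidence $f(a)=f(b)$ forces $a\equiv b$ modulo $t_1-t_0$ with $a,b\ge t_0$, whence periodicity propagates the equality to $a+h,b+h$. The finally constant case is the degenerate period-zero version of the same computation, and right-continuity (and, under the standing c\`adl\`ag assumption, the left limits) of the constructed paths is inherited from that of $f$.

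For ``$\Rightarrow$'' suppose $f=X^{f(0)}$ is a path of a deterministic Markov process, so that \eqref{timehom} gives $f(s)=f(t)\Rightarrow f(s+h)=f(t+h)$ for $f$ itself. If $f$ is injective we are done, so assume it is not; then $t_0<\infty$. First I would record that $f$ is injective on $[0,t_0[$, since a coincidence $f(a)=f(b)$ with $a<b<t_0$ would make $a$ a point from which $f$ returns, contradicting the minimality of $t_0$. By the preceding remark the infimum $t_0$ is attained, so there is $t>t_0$ with $f(t_0)=f(t)$, and applying \eqref{timehom} turns $f$ into a \emph{periodic} function on $[t_0,\infty[$. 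Now I would split according to whether this periodic part is constant. If $f$ is constant on $[t_0,\infty[$, then $f$ is finally constant. If it is not, a right-continuous non-constant periodic function has a smallest period $L>0$, and the plan is to prove $L=t_1-t_0$ together with the first-return identity $f(t_1)=f(t_0)$. The ingredients are: minimality of $L$ forces $f(v)\ne f(t_0)$ for $t_0<v<t_0+L$ (any such coincidence would be a strictly shorter period), and for $t_0<v<t_0+L$ the value $f(v)$ is new, i.e.\ $f(v)\notin f([0,v[)$, because a coincidence with a point of $[0,t_0[$ contradicts minimality of $t_0$ while a coincidence within $]t_0,v[$ contradicts minimality of $L$. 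Hence the first time $f$ meets a previously visited point is exactly $t_0+L$, so $t_1=t_0+L$, and the periodicity $f(u)=f(u-(t_1-t_0))$ for $u\ge t_1$ is precisely jump-periodicity.

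The main obstacle is this last identification in the non-constant case: reconciling the two \emph{a priori} unrelated infima $t_0$ and $t_1$ of Definition \ref{def:tzero} with the analytic fundamental period $L$ of the periodic part. The delicate feature is that, as the example $f(t)=(\sin(t),\cos(t))'$ in the preceding remark shows, in dimension $d\ge2$ the return to an old point need not be accompanied by a jump, so one cannot invoke the one-dimensional dichotomy recalled before Definition \ref{def:tzero}; the argument must run entirely through time-homogeneity, the minimality of $t_0$ and $L$, and right-continuity, the latter being what guarantees (via the preceding remark) that the relevant infima are attained and that the minimal period is positive whenever $f$ fails to be constant.
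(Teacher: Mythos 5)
Your proposal is correct. The sufficiency direction is essentially the paper's own argument: the same construction ($X^y_t=f(f^{\leftarrow}(y)+t)$ on the range of $f$, $X^y_t=y$ off it) and the same reduction of \eqref{timehom} to the implication $f(a)=f(b)\Rightarrow f(a+h)=f(b+h)$, which you verify via injectivity on $[0,t_1[$ plus periodicity on $[t_0,\infty[$, whereas the paper checks the transition-kernel identity case by case. The necessity direction is where you genuinely deviate, and to your advantage. The paper handles $t_0=t_1$ by local constancy and, for $t_0<t_1$, asserts $f(t_0+h)=f(t_1+h)$ ``by time homogeneity''; that assertion presupposes $f(t_0)=f(t_1)$, which does not follow from the definitions of the two infima alone and is left unjustified there. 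You instead use attainment of $t_0$ and \eqref{timehom} to make $f$ periodic on $[t_0,\infty[$, split on whether this periodic tail is constant, and in the non-constant case introduce the minimal period $L>0$ and prove $t_1=t_0+L$, obtaining $f(t_1)=f(t_0)$ and jump-periodicity as byproducts. The price is two auxiliary facts: that a non-constant right-continuous periodic function has a minimal period, and that a coincidence $f(s)=f(v)$ with $t_0\le s<v<t_0+L$ forces a period shorter than $L$ on all of $[t_0,\infty[$ --- the latter needs a short transport argument that you elide, since \eqref{timehom} a priori only makes $v-s$ a period on $[s,\infty[$ and one must push it back to $[t_0,s[$ using $L$-periodicity. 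With that one line supplied, your argument is complete and in fact fills the step the paper glosses over.
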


\begin{proof}
Let $f$ be Markov-expandable, i.e. there exists a deterministic Markov process such that $X_t^{f(0)}=f(t)$. If this path is not injective, there exists points $s<t$ such that $f(s)=f(t)$. Now let $t_0\leq t_1<\infty$ be defined as above. If $t_0=t_1$ the path is locally constant on an interval $[t_0,t_0+\varepsilon[$ and hence constant on $[t_0,\infty[$. Otherwise we obtain $f(t_0+h)=f(t_1+h)$ for every $h\geq 0$ by time homogeneity. Since every $u\geq t_1$ can be written as $u=t_1+h$ with $h\geq 0$ and since $u-(t_1-t_0)=t_0+h$ we obtain $f(u)=f(u-(t_1-t_0))$. \\
Now let one of the three properties be fulfilled.
\emph{Case 1:} $t_0=t_1=\infty$. In this case the construction described in \ref{ex:startingpoint} works, i.e. take $f$ as generating path on $f([0,\infty[)$. Outside of the range of $f$ we set $X_t^x=x$ for every $t\geq 0$.\\
\emph{Case 2:} $t_0=t_1<\infty$. The function becomes constant at $t_0$, i.e. $f(t)=c$ for $t\geq t_0$. In this case there is only one point which is visited more than once. The construction works as in Case 1, the only difference being that $f$ is inverted only on $[0,t_0[$. Paths hitting $c$ become constant. In particular we have $X_t^c=c$ for every $t\geq 0$. Time homogeneity is then clear and the Markov property is trivially fulfilled since $X_t^x=X_t^y$ for $x\neq y$ happens only in the case $X_t^x=c=X_t^y$ and this case we have $X_{t+h}^x=c=X_{t+h}^y$ for every $h\geq 0$.\\ 
\emph{Case 3:} $t_0<t_1<\infty$. Set 
\[
X_t^x:=\begin{cases}f(t+f^{\leftarrow}(x)) & \text{if } x\in f([0,\infty[) \\
                    x                     & \text{else.}
                \end{cases}
\]
Now let $z,w\in\bbr^d$, $t,h\geq 0$ and $x\in f([0,\infty[)$ such that $X_t^x=z$. For $t<t_1$ we know where the path being at time $t$ in $z\in f([0,\infty[)$ has started at time zero:
\[
x=f(f^\leftarrow(z)-t)
\]
if $t\leq f^\leftarrow(z)$. Otherwise (i.e. $f^\leftarrow(z)<t<t_1$) either $f^\leftarrow(z) \leq t_0$ and there is no $x$ such that $X_t^x=z$ or $t_0< f^\leftarrow(z) <t<t_1$ and we have $x=f(f^\leftarrow(z)-t+t_1-t_0)$. \\
The Markov property is clear, since by definition we have for every $t\geq 0$ that $x\neq y$ implies $X^x_t\neq X^y_t$. 
By jump-periodicity we can subtract $n\cdot (t_1-t_0)$ for a suitable $n\in\bbn$ `inside' of $f$ and can therefore assume w.l.o.g. $t<t_1$. We obtain:
\begin{align*}
P_{t,t+h}^x(z,\{w\}) = 1 &\gdw f\Big((t+h)+f^\leftarrow(x)\Big) = w \\
&\gdw f\Big((t+h)+f^\leftarrow(f(f^\leftarrow(z)-t))\Big) = w \\
&\gdw f\Big(h+f^\leftarrow(z) \Big)= w \\
&\gdw P_{0,h}^0(z,\{w\}) = 1.
\end{align*}
This yields the homogeneity in time. Finally we have $X_t^{f(0)}=f(t)$ since $f^{\leftarrow}(f(0))=0$.
\end{proof}

Virtually all examples of homogeneous diffusions (with jumps) in the sense of \cite{jacodshir} in the literature are Markov processes. As a first application of the theorem we prove the existence of a homogeneous diffusion which is not Markovian.

\begin{example} \label{ex:hdwjunif} We define the following deterministic process:
denote for $t\in\bbr$ the sabertooth function $\{t\}:=t-\left\lfloor t \right\rfloor$ and define
\begin{align*}
f(t)&:=\binom{\{t\}}{0} \cdot 1_{[0,1/6[} \left( \left\{ \frac{t}{6} \right\} \right)
     +\binom{1-\{t\}}{\{t\}} \cdot 1_{[1/6,2/6[} \left( \left\{ \frac{t}{6} \right\} \right)
     +\binom{0}{1-\{t\}} \cdot 1_{[2/6,3/6[} \left( \left\{ \frac{t}{6} \right\} \right) \\
     &+\binom{-\{t\}}{0} \cdot 1_{[3/6,4/6[} \left( \left\{ \frac{t}{6} \right\} \right)
     +\binom{\{t\}-1}{-\{t\}} \cdot 1_{[4/6,5/6[} \left( \left\{ \frac{t}{6} \right\} \right)
     +\binom{0}{\{t\}-1} \cdot 1_{[5/6,1[} \left( \left\{ \frac{t}{6} \right\} \right)
\end{align*}
We define the process $X$ by
\[
X_t^x:=\begin{cases}f(t+f^{\leftarrow}(x)) & \text{if } x\in f([0,\infty[) \\
                    x                     & \text{else.}
                \end{cases}
\]
For the readers convenience we include the following graphic which shows what the trajectory of the process looks like:

\centerline{
\setlength{\unitlength}{1cm}
\begin{picture}(1,3)
\put(0,0){\line(0,1){3}}
\put(-2,1){\line(1,0){4.1}}
\linethickness{0.5mm}
\put(0,1){\vector(0,1){1}}
\put(0,2){\vector(-1,-1){1}}
\put(-1,1){\vector(1,0){1}}
\put(0,1){\vector(0,-1){1}}
\put(0,0){\vector(1,1){1}}
\put(1,1){\vector(-1,0){1}}
\put(1,0.8){1}
\put(2,0.8){2}
\put(-0.2,1.8){1}
\end{picture}
}
This process can be written as
\begin{align} \label{inteq}
X_t=x+\int_0^t \ell(X_s) \, ds 
\end{align}
for every starting point $x\in\bbr^2$ where $\ell:\bbr^2\to\bbr^2$ is given by
\[
\ell(y)=\begin{cases} ( \sign(y^{(1)}), 0)' & \text{ if } y^{(1)}=0 \text{ and } \abs{y^{(2)}}\leq 1\\
                      ( 0, -\sign(y^{(2)}))' & \text{ if } y^{(2)}=0 \text{ and } \abs{y^{(1)}} \leq 1 \\
                      ( -\sign(y^{(1)}), \sign(y^{(2)}))' & \text{ if } \abs{y^{(1)}-y^{(2)}}=1 \text{ and } \abs{y^{(1)}} \leq 1\\
                      (0,0)'                & \text{ else.}
\end{cases}
\]
Therefore the process is a homogeneous diffusion, for which $\ell$ does not depend on the starting point, but by Theorem \ref{thm:markovexp} it is not a Markov process.

\end{example}

\begin{remark}
The integral equation \eqref{inteq} has infinitely many solutions. Out of these only 2 solutions are Markovian. Namely the one where the process reaching $(0,0)'$ always goes `up' and the one which always goes `down' when reaching this point.
\end{remark}

\begin{corollary} 
A function $f:[0,\infty[\to \bbr^d$ is Hunt-expandable iff it is Markov expandable and continuous. 
\end{corollary}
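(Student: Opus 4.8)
The plan is to characterize Hunt-expandability by combining Theorem~\ref{thm:markovexp} with the observation, recalled in the introduction, that for a deterministic process the Hunt property (quasi-left continuity with respect to every $\bbp^x$) is equivalent to ordinary \emph{left} continuity of all paths. Since we already assume the paths are right-continuous (expandability is defined for right-continuous $f$), left continuity together with right continuity is exactly continuity. So the corollary amounts to showing that the process constructed to witness Markov-expandability can be chosen to have \emph{all} paths continuous precisely when $f$ itself is continuous.

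For the forward direction, suppose $f$ is Hunt-expandable. Then it is in particular Markov-expandable, and the witnessing process $X$ is a Hunt process, hence has left-continuous (thus continuous) paths for every starting point. In particular the path $X^{f(0)}=f$ is continuous. This direction is essentially immediate once the ``Hunt $\Leftrightarrow$ left continuous paths'' equivalence is invoked.

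For the converse, assume $f$ is Markov-expandable and continuous. By Theorem~\ref{thm:markovexp}, $f$ is injective, finally constant, or jump-periodic. I would argue that continuity rules out the genuine jump-periodic case: if $t_0<t_1$ then at time $t_1$ the path must reach a point $f(t_1)=f(t_0)$ it visited strictly earlier, and the reasoning recalled before Definition~\ref{def:tzero} (the analogue of Theorem~2.7 of \cite{detmp1}, for $d=1$) shows a one-dimensional path cannot move \emph{continuously} to a previously visited point, so the return must be a jump, contradicting continuity. Thus for continuous $f$ only the injective or finally constant cases survive (for $d\geq 2$ one should note that the relevant constructions still produce continuous paths, as in the $f(t)=(\sin t,\cos t)'$ type example). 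In each of these cases I would take the very construction of Theorem~\ref{thm:markovexp} (Case~1 or Case~2) and check that \emph{every} path it produces is continuous: paths inside the range are shifts $t\mapsto f(t+f^{\leftarrow}(x))$ of the continuous function $f$, hence continuous, paths outside the range are constant $X^x_t=x$, and in the finally constant case the path hitting $c$ stays at $c$ continuously. Hence all paths are continuous, the process is left-continuous, and therefore Hunt.

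The main obstacle I anticipate is the converse, specifically the clean exclusion of jump-periodicity and the verification that continuity of $f$ forces \emph{all} paths of the constructed process to be continuous, not merely the path starting at $f(0)$. The subtle point is that a path starting outside the range is constant and thus trivially continuous, while a path starting inside the range at a point $x=f(f^{\leftarrow}(x))$ is a time-shift of $f$; one must confirm there is no continuity failure at the splice where the generalized inverse $f^{\leftarrow}$ is used, and that in the finally constant case the transition onto the constant part matches up without a jump. Once the ``no continuous return to a visited point'' lemma is in hand, this reduces to routine checking.
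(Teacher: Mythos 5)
Your argument is correct and is precisely the reasoning the paper leaves implicit: the corollary is stated without proof, resting on the equivalence (recalled in the introduction) of quasi-left continuity with ordinary left continuity of paths for deterministic processes, together with the observation that every path produced by the construction in Theorem \ref{thm:markovexp} is either constant or a time-shift $t\mapsto f(t+f^{\leftarrow}(x))$ of $f$, hence continuous whenever $f$ is. The only point to tighten is that your exclusion of the jump-periodic case is valid only for $d=1$; for $d\geq 2$ a continuous jump-periodic $f$ (e.g.\ $f(t)=(\sin t,\cos t)'$) does occur and must be run through Case~3 of the theorem's construction, which your parenthetical remark already handles since that case likewise yields only time-shifts of $f$ and constant paths.
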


The order structure of $\bbr$ gives rise to the following characterization in the one-dimensional case. Compare in this context \cite{detmp1} Theorem 2.7.

\begin{corollary} \label{cor:huntexpone}
A function $f:[0,\infty[\to \bbr$ is Hunt-expandable iff it is continuous and there exists a $t_0\in [0,\infty]$ such that $t\mapsto f(t)$ is strictly monotone (increasing or decreasing) on $[0,t_0[$ and constant on $[t_0,\infty[$.
\end{corollary}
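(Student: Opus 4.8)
The plan is to lean on the preceding corollary, which identifies Hunt-expandability with the conjunction of Markov-expandability and continuity, and then to rewrite the three alternatives of Theorem~\ref{thm:markovexp} in the order language of $\bbr$. For the forward direction I would assume $f$ Hunt-expandable, so $f$ is continuous and, by Theorem~\ref{thm:markovexp}, injective, finally constant, or jump-periodic. The decisive point is that continuity rules out the jump-periodic alternative: here $f$ is injective on $[0,t_1[$, a continuous injective function on that interval is strictly monotone, so $\lim_{t\to t_1^-}f(t)$ is a strict one-sided extremum and in particular differs from $f(t_0)$ when $t_0<t_1$; continuity at $t_1$ would then give $f(t_1)\neq f(t_0)$, contradicting the relation $f(t_1)=f(t_0)$ forced by jump-periodicity. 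This is exactly the one-dimensional ``no continuous return to a visited point'' observed before Definition~\ref{def:tzero}.

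It remains to read off the monotone normal form from the two surviving cases. If $f$ is injective I would take $t_0=\infty$: a continuous injective map $[0,\infty[\to\bbr$ is strictly monotone on all of $[0,\infty[$, and the clause on $[t_0,\infty[$ is vacuous. If $f$ is finally constant, Definition~\ref{def:tzero}(b) supplies a $t_0$ with $f$ injective on $[0,t_0[$ and constant on $[t_0,\infty[$; continuity makes the injective piece strictly monotone, which is precisely the asserted shape.

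For the converse I would start from $f$ continuous, strictly monotone on $[0,t_0[$, constant on $[t_0,\infty[$, and produce Markov-expandability, so that the preceding corollary yields Hunt-expandability. When $t_0=\infty$ the function is strictly monotone on $[0,\infty[$, hence injective, and the injective alternative of Theorem~\ref{thm:markovexp} applies. When $t_0<\infty$ I would verify the requirements of Definition~\ref{def:tzero}(b): strict monotonicity gives injectivity on $[0,t_0[$, while the constant value $c:=\lim_{s\to t_0^-}f(s)=f(t_0)$ is attained only from $t_0$ on, since every $f(t)$ with $t<t_0$ lies strictly on one side of $c$.

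I expect the single genuine obstacle to be the exclusion of the jump-periodic case under continuity; everything else is the standard fact that a continuous injection on an interval is strictly monotone, plus bookkeeping for the endpoint $t_0$. Since that exclusion is the same phenomenon already recorded before Definition~\ref{def:tzero}, I would either cite that passage directly or reproduce the short monotonicity-and-continuity argument sketched above.
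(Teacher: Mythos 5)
Your proof is correct and follows exactly the route the paper intends (it gives no explicit proof, only the remark that ``the order structure of $\bbr$'' yields the result): combine the preceding corollary with Theorem~\ref{thm:markovexp}, use that a continuous injection on an interval of $\bbr$ is strictly monotone, and observe that this monotonicity makes the jump-periodic alternative incompatible with continuity since it would force $f(t_1)=f(t_0)$ with $t_0<t_1$. Your extra check in the converse that the constant value is not attained before $t_0$ is a worthwhile detail that the paper leaves implicit.
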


Since $t_0\in [0,\infty]$ the `pure types' of paths which are only constant \emph{or} strictly monotone are included in this corollary.

\begin{proposition}
A function $f:[0,\infty[\to \bbr^d$ is It\^o-expandable iff it is Hunt-expandable and in addition each component of the vector $f-f(0)$ is absolutely continuous w.r.t. Lebesgue measure. 
\end{proposition}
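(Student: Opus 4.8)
The plan is to prove the two implications separately, exploiting that Itô processes are in particular Hunt semimartingales (so Hunt-expandability is automatic) and that, for a \emph{deterministic} process, the martingale part of the canonical decomposition must vanish, leaving only the drift.

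\textbf{Necessity.} Suppose $X$ is a deterministic Itô process with $X^{f(0)}_t=f(t)$. Since Itô processes are Hunt semimartingales, $X$ is Hunt, and for deterministic processes this is equivalent to continuity of the paths; hence $f$ is continuous and Hunt-expandable. To extract absolute continuity I would look at the canonical decomposition $X^{f(0)}=f(0)+B+M$ into its continuous finite-variation part $B_t=\int_0^t\ell(f(s))\,ds$ and its continuous local-martingale part $M$. Because $f$ and $B$ are deterministic, $M=f-f(0)-B$ is a \emph{deterministic} continuous local martingale, and such a process is constant: its quadratic variation $C$ is deterministic, and by the Dubins--Schwarz time change a continuous local martingale whose law at each time is a point mass must have $\langle M\rangle\equiv 0$, so $M\equiv 0$ and $C\equiv 0$. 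The decomposition then collapses to $f(t)=f(0)+\int_0^t\ell(f(s))\,ds$. Since the first characteristic $B$ has finite variation, $s\mapsto\ell^{(j)}(f(s))$ lies in $L^1_{\mathrm{loc}}$, so each component of $f-f(0)$ is an indefinite integral, i.e.\ absolutely continuous.

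\textbf{Sufficiency.} Conversely, assume $f$ is Hunt-expandable and $f-f(0)$ is absolutely continuous. By the characterization of Hunt-expandability, $f$ is continuous and Markov-expandable, hence injective, finally constant, or jump-periodic, and the construction of Theorem~\ref{thm:markovexp} yields a deterministic Markov process $X$ with $X^{f(0)}=f$ in which every path is a shift $t\mapsto f(t+c)$ of $f$ (and $X^x_t=x$ off the range). Since $X$ is continuous, I would take $Q\equiv 0$ and $N\equiv 0$ and only need a measurable drift $\ell$ with $f(t)-f(0)=\int_0^t\ell(f(s))\,ds$. Writing $f'$ for the a.e.-defined Lebesgue derivative, I define $\ell(y):=f'(f^{\leftarrow}(y))$ for $y$ in the range of $f$, using the generalized inverse of Definition~\ref{def:tzero}, and $\ell(y):=0$ otherwise. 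On the fundamental domain $[0,t_1[$ the map $f$ is injective, so $f^{\leftarrow}$ is Borel measurable there by the Lusin--Souslin theorem, the range of $f$ is Borel, and $\ell$ is a Borel function while $Q\equiv 0$, $N\equiv 0$ are admissible coefficients.

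It then remains to verify $\ell(f(s))=f'(s)$ for almost every $s$: for $s$ in the fundamental domain this is immediate from $f^{\leftarrow}(f(s))=s$, and for larger $s$ it follows from the periodic (or finally constant) structure, which makes $f'$ periodic up to a null set. Integrating gives $f(t)-f(0)=\int_0^t\ell(f(s))\,ds$, and the identical computation applied to each shifted path $f(\cdot+c)$, together with $\ell\equiv 0$ off the range keeping the frozen paths $X^x_t=x$ consistent, yields $X^x_t=x+\int_0^t\ell(X^x_s)\,ds$ for every starting point. Thus $X$ has characteristics of Itô type and $f$ is Itô-expandable. The main obstacle is exactly this measure-theoretic bookkeeping: $\ell$ must be a genuine Borel function of the state even though $f'$ is only defined for almost every \emph{time}, so one has to secure both the Borel measurability of $f^{\leftarrow}$ on the injective fundamental domain and the fact that the identity $\ell(f(s))=f'(s)$ fails only on a countable union of translates of a time-null set, hence only on a null set.
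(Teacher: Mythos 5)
Your proof is correct, and the two directions are organized the same way as in the paper, but the decisive step of the sufficiency part is handled by a genuinely different route. The paper disposes of the matter in a few lines: for a deterministic Hunt semimartingale the second and third characteristics vanish, so the first characteristic is $B^x_t=X^x_t-X^x_0$, which inherits absolute continuity from $f$; the passage from ``$B$ absolutely continuous in $t$'' to ``$B_t=\int_0^t\ell(X_s)\,ds$ with a measurable, state-dependent $\ell$'' is then delegated to the Cinlar--Jacod--Protter--Sharpe machinery (Theorem 6.25 of \cite{vierleute}), with the remark that the resulting density is deterministic and hence optional for any filtration. You instead build $\ell$ by hand as $\ell(y)=f'(f^{\leftarrow}(y))$ on the range and $0$ elsewhere, and you carry out yourself exactly the bookkeeping that the citation hides: Borel measurability of $f^{\leftarrow}$ on the injective fundamental domain via Lusin--Souslin, the a.e.\ identity $\ell(f(s))=f'(s)$, and its propagation to the shifted paths and to the frozen paths off the range. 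This makes your argument self-contained and more explicit (one actually sees what $\ell$ is), at the cost of length; the paper's version is shorter but leans on an external structure theorem. Your necessity direction, via the observation that a deterministic continuous local martingale is constant, is just an unpacking of the paper's assertion that the second and third characteristics vanish, and is fine. I see no genuine gap; the only point worth tightening is the justification that a deterministic continuous local martingale has vanishing quadratic variation (a one-line localization argument rather than an appeal to Dubins--Schwarz), but this is standard and the paper does not spell it out either.
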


\begin{proof}
For a deterministic Hunt semimartingale the second and third characteristic vanish. Therefore the first characteristic can be written as $B^x_t=X^x_t-X^x_0$ on the image of $f$ where $X^x_t$ is just $f$ shifted to the left. The absolute continuity is inherited from $f$. Anywhere else we set $X_t^x=x$ for every $t\geq 0$, hence $B^x_t=0$ which is as well absolutely continuous. The density $\tilde{b}$ we obtain is deterministic and hence optional w.r.t. any filtration (Compare in this context \cite{vierleute} Theorem 6.25 and the introduction to Section 7 of that article).
\end{proof}

An important subclass of Hunt processes are Feller processes. If the semigroup $(T_t)_{t\geq 0}$ of operators which is defined on the bounded Borel measurable functions by
\[
T_t u(x) = \bbe^x u(X_t) = \int_\Omega u(X_t(\omega)) \, \bbp^x(d\omega)=\int_{\bbr^d} u(y) \, P_t(x,dy).
\]
satisfies the following conditions\\
\hspace*{5mm}$(F1)$ $T_t:C_\infty(\bbr^d) \to C_\infty(\bbr^d)$ for every $t\geq 0$, \\
\hspace*{5mm}$(F2)$ $\lim_{t\downarrow 0} \norm{T_tu-u}_\infty =0$ for every $u\in C_\infty(\bbr^d)$.\\
we call the process (and the semigroup) \emph{Feller}.

The following proposition only holds in the one-dimensional case.

\begin{proposition} \label{prop:Fellerexp}
A function $f:[0,\infty[\to \bbr$ is Feller-expandable iff it is Hunt-expandable.
\end{proposition}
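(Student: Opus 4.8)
The forward implication is immediate: every Feller process is a Hunt process (see the inclusion diagram in the Introduction), so a Feller-expandable function is a fortiori Hunt-expandable. For the converse the plan is to invoke the one-dimensional description in Corollary~\ref{cor:huntexpone}: after replacing $f$ by $-f$ if necessary, I may assume that $f$ is continuous, strictly increasing on $\roi{0}{t_0}$ and constant on $\roi{t_0}{\infty}$ for some $t_0\in\ci{0}{\infty}$. Write $a:=f(0)$ and $L:=\sup_t f(t)\in\loi{a}{\infty}$. The crucial observation — and the reason this is the sole exception to the rule ``set $X_t^x=x$ outside the range'' — is that to keep $x\mapsto X_t^x$ continuous at the left endpoint $a$ of the range one must extend the flow continuously below $a$ instead of freezing it there.

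Accordingly, I would extend $f$ to a continuous, strictly increasing bijection $\tilde f$ from $\oi{-\infty}{t_0}$ onto $\oi{-\infty}{L}$ by prolonging it to the left of $0$ so that $\tilde f(s)\to-\infty$ as $s\to-\infty$ (any continuous strictly increasing prolongation with $\tilde f(0)=a$ serves), and set $\tilde f(t_0):=L$. I then define
\[
X_t^x:=\tilde f\big(\min(t+\tilde f^{-1}(x),\,t_0)\big)\ \text{ for } x\in\oi{-\infty}{L}, \qquad X_t^x:=x \ \text{ for } x\ge L .
\]
Being a time shift read in the chart $\tilde f$ (with the point $L$ made absorbing in the finally constant case $t_0<\infty$), this is a genuine one-parameter semiflow, hence a time-homogeneous deterministic Markov process; it is continuous in $t$, so Hunt, and $X_t^{f(0)}=\tilde f(\min(t,t_0))=f(t)$ because $\tilde f^{-1}(a)=0$.

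It then remains to verify $(F1)$ and $(F2)$ for $T_tu(x)=u(X_t^x)$. For $(F1)$, the map $x\mapsto X_t^x$ is continuous on $\oi{-\infty}{L}$ and on $\roi{L}{\infty}$ as a composition of continuous maps, and it is continuous across $L$ because points below $L$ are carried up to the fixed point $L$ while points $\ge L$ stay put; continuity at $a$ is now automatic, the artificial jump having been removed by the left extension. Vanishing at infinity holds because the flow sends $x\to+\infty$ to $+\infty$ (or leaves it fixed above $L$) and $x\to-\infty$ to $-\infty$, so $u(X_t^x)\to0$ for every $u\in C_\infty(\bbr)$; thus $T_t\colon C_\infty(\bbr)\to C_\infty(\bbr)$.

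The hard part will be $(F2)$, namely $\norm{T_tu-u}_\infty\to0$ as $t\downarrow0$. One cannot simply ask for $\sup_x|X_t^x-x|\to0$: in the chart this supremum equals $\sup_s|\tilde f(s+t)-\tilde f(s)|$, which need not vanish if $\tilde f$ is steep far out. The way around this is to exploit that $u\in C_\infty(\bbr)$. Fix $u$ and $\varepsilon>0$, pick $N$ with $|u|<\varepsilon/2$ off $\ci{-N}{N}$ and a modulus $\delta$ of uniform continuity of $u$. On a large compact $\ci{-R}{R}$ the joint continuity of $(t,x)\mapsto X_t^x$ gives $\sup_{x\in\ci{-R}{R}}|X_t^x-x|<\delta$ for small $t$, hence $|u(X_t^x)-u(x)|<\varepsilon$ there; for large positive $x$ the point stays above $N$ (fixed above $L$, or pushed further right by the increasing flow), so both $u(x)$ and $u(X_t^x)$ are below $\varepsilon/2$; and choosing $R$ so that $\tilde f^{-1}(-R)\le\tilde f^{-1}(-N)-1$ one gets $X_t^x\le-N$ for $x<-R$ and $t\le1$, so again both terms are small. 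Combining the three regimes yields $\norm{T_tu-u}_\infty\le\varepsilon$ for $t$ small, which is $(F2)$. The strictly decreasing case follows by the reflection $x\mapsto-x$, completing the proof that Hunt-expandability implies Feller-expandability.
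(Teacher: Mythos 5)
Your proof is correct and follows essentially the same route as the paper: extend $f$ to the left by a continuous strictly increasing prolongation tending to $-\infty$ (the paper takes the specific choice $\Phi(t)=f(0)+t$ for $t<0$), run the shift construction with the top of the range absorbing or frozen, and check $(F1)$ and $(F2)$. The only difference is that the paper dismisses the verification of $(F1)$ and $(F2)$ as ``simple to show'' with a pointer to an earlier result, whereas you carry out the three-regime estimate explicitly; the substance is identical.
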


\begin{proof}
The `only if'-part is clear. Now let $f$ be Hunt-expandable. W.l.o.g. let $f$ be increasing up to time $t_0$ (cf. Corollary \ref{cor:huntexpone}). Set 
\[
\Phi(t):=\begin{cases} f(t) & \text{if } t\geq 0 \\
                      f(0)+t & \text{if } t <0
        \end{cases}
\]
and
\[
X_t^x:=\begin{cases}\Phi(t+\Phi^{\leftarrow}(x)) & \text{if } x\in \Phi(]-\infty,\infty[) \\
                    x                     & \text{else.}
                \end{cases}
\]
The image of $\Phi$ is either $]-\infty,\infty[$ or there exists an $a\in\bbr$ such that the image is $]-\infty,a[$ or $]-\infty, a]$ in the latter cases we set $X_t^x:=x$ for every $t\geq 0$. It is simple to show that (F1) and (F2) are fulfilled by this process. Compare in this context Theorem 3.5 of \cite{detmp1}.
\end{proof}

In the general case we have the following:

\begin{proposition} \label{prop:Fellerexpddim}
If a function $f:[0,\infty[\to \bbr^d$ is continuous and finally constant or jump-periodic it is Feller-expandable.
\end{proposition}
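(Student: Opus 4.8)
The plan is to realize the desired Feller process as a deterministic flow that is the identity outside a fixed compact set. First I would isolate the following reduction: it suffices to produce a jointly continuous map $\phi:[0,\infty[\times\bbr^d\to\bbr^d$ with $\phi_0=\mathrm{id}$, $\phi_{s+t}=\phi_s\circ\phi_t$ and $\phi_t(f(0))=f(t)$ for all $s,t\geq 0$, and such that $\phi_t=\mathrm{id}$ outside one fixed compact set $C$ (for every $t$). Setting $X_t^x:=\phi_t(x)$ then gives a deterministic Markov process, since the semigroup identity is exactly the time-homogeneity \eqref{timehom}, and for the transition semigroup $T_tu(x)=u(\phi_t(x))$ both Feller axioms become routine: as $\phi_t$ is continuous and agrees with the identity off $C$, the function $u\circ\phi_t$ is continuous and coincides with $u\in C_\infty(\bbr^d)$ outside $C$, giving (F1); and as $\phi$ is uniformly continuous on the compact set $[0,\delta]\times C$ while $\phi_t=\mathrm{id}$ off $C$, the uniform continuity of $u$ yields $\sup_x|u(\phi_t(x))-u(x)|\to 0$ as $t\downarrow 0$, i.e. (F2). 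Thus the whole problem is pushed onto the construction of such a compactly supported flow.

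Next I would build the flow on the orbit itself and then extend it. On the compact range $K:=f([0,\infty[)$ the natural candidate is the shift semigroup $\phi_t(f(s)):=f(s+t)$, equivalently $\phi_t(y)=f(t+f^{\leftarrow}(y))$, which is precisely the Markov process furnished by Theorem \ref{thm:markovexp}. The key point is that this map is already \emph{continuous} on $K$: in the finally constant case $K$ is a simple arc and the flow slides along it and is absorbed at the terminal point $c$; in the jump-periodic (hence here genuinely continuous) case $K$ is a Jordan curve or a lasso, and although $f^{\leftarrow}$ jumps at the base point $p=f(t_0)=f(t_1)$, periodicity reconciles the two one-sided limits via $f(t+t_1)=f(t+t_0)$, so $\phi_t$ is continuous there as well. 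The guiding models are the segment with an absorbing translation (finally constant) and the circle with the rotation flow (jump-periodic), the latter showing concretely that (F1) and (F2) can hold.

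It then remains to extend this continuous shift semigroup from $K$ to all of $\bbr^d$, keeping it a semigroup, jointly continuous, and equal to the identity off a compact neighborhood of $K$. Here I would use a tubular/collar neighborhood $N$ of $K$: in the finally constant case, straighten $N$ to a product $[0,t_0]\times D$ (with $f(s)$ corresponding to $(s,0)$) and transport an \emph{absorbing, tapered} model flow $(s,w)\mapsto(\min(s+t\,\lambda(w),t_0),w)$, where the time-scaling $\lambda$ equals $1$ on the core and is cut off to $0$ near $\partial N$; the truncation by $\min(\cdot,t_0)$ is itself a genuine semigroup, so the product flow is one too, and tapering $\lambda$ to $0$ glues it to the identity outside $N$. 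In the jump-periodic case one proceeds with the model $S^1\times D$ carrying the rotation-shift flow, again tapered to the identity near $\partial N$, the handle of a lasso being carried by an arc-type translation feeding into the loop and the loop by the toroidal rotation-shift.

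The main obstacle is precisely this extension: producing a map that is \emph{simultaneously} an exact semigroup, jointly continuous, supported in a compact set, and restricting to the prescribed shift on $K$ — all under the sole hypothesis that $f$ is continuous, so that no vector field or ODE is available and $K$ may be a geometrically wild arc or Jordan curve. Two further points need care. First, the straightening of $N$ to a product neighborhood is a genuine topological input: for $d=2$ it is supplied by the Schoenflies theorem, while for $d\geq 3$ one should instead build the neighborhood directly from the parametrization $f$ by a mapping-cylinder/collar construction, to avoid wild embeddings. Second, joint continuity and the semigroup law must be verified at the two distinguished points — the terminal point $c$ of a finally constant path and the base point $p$ of a lasso — where the tapered model flows from adjacent pieces have to be matched; the absorbing truncation handles the former and the periodic matching described above handles the latter.
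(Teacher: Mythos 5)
Your plan diverges from the paper's proof at the very first step, and it is worth being clear about where. The paper does \emph{not} build a flow on a neighbourhood of the curve: it works entirely on the compact range $K=f([0,t_1])$, declares (F2) trivial, and for (F1) verifies only that $x\mapsto X_h^x$ is sequentially continuous \emph{along} $K$, by a compactness argument (if $x_n=X_{t_n}\to x$ in $K$ but $(X_{t_n+h})_n$ diverges, pass to sub-subsequences with convergent time parameters and derive a contradiction with continuity of $f$ and time homogeneity). The continuity of $\phi_t$ on $K$ that you obtain by observing that $f^{\leftarrow}$ jumps only at the base point while periodicity reconciles the two one-sided limits is exactly what that argument establishes, so up to this point the two proofs agree in substance. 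The paper then dismisses what happens at points of $K$ approached from outside $K$ ("since $f([0,t_1])$ is closed, those problems could only show up within the curve"); this is precisely the point you refuse to concede, and it is what motivates your entire tubular-neighbourhood construction. Your instinct here is sound: with the identity off $K$ and $d\geq 2$, the set $K$ has empty interior, and $T_tu$ as defined by the shift on $K$ and by $u$ itself off $K$ need not be continuous at a non-fixed point of $K$. So your proposal is not a rephrasing of the paper's proof but an attempt at a genuinely stronger construction.

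The gap is that the central step of that construction --- extending the shift semigroup from $K$ to a jointly continuous, compactly supported semigroup on all of $\bbr^d$ --- is never carried out; you name it yourself as "the main obstacle" and then only gesture at it. The reduction in your first paragraph is correct, the absorbing model flow $(s,w)\mapsto(\min(s+t\lambda(w),t_0),w)$ really is a semigroup, and for $d=2$ the Schoenflies theorem supplies the required collar, so the argument can be completed there. But for $d\geq 3$ a merely continuous injective $f$ may trace a wild arc or wild Jordan curve (Fox--Artin type), which admits no product neighbourhood at all; the proposed "mapping-cylinder/collar construction" produces an abstract space, not an extension of the flow to a neighbourhood \emph{inside} $\bbr^d$, and nothing in the proposal bridges that. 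As written, the argument therefore proves the proposition only for $d\leq 2$ or for tamely embedded ranges. To submit this as a proof you must either complete the extension step in arbitrary dimension or fall back on the paper's route and justify why continuity along $K$ alone suffices for (F1) with your chosen definition of the process off $K$.
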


\begin{proof}
The property (F2) is trivially fulfilled. The whole information is contained in the restriction $f|[0,t_1]$. Since the image $f([0,t_1])$ is compact, vanishing at infinity is not a problem. In fact the only problems which could occur are by destroying continuity. However, since $f([0,t_1])$ is closed, those problems could only show up `within' the curve. Now assume that there was a sequence $(x_n)_{n\in\bbn}\subseteq f([0,t_1])$ and an $h>0$ such that
\[
X_{t_n}=x_n\xrightarrow[n\to\infty]{} x \text{ and } (X_{t_n+h})_{n\in\bbn} \text{ is divergent.}
\]
We have $x\in f([0,t_1])$. By compactness we have that there are two points $y\neq z$ and subsequences $(n(k))_{k\in\bbn}$ and $(n(j))_{j\in\bbn}$ such that
\[
X_{t_{n(k)}+h} \rightarrow y \neq z \leftarrow X_{t_{n(j)}+h}
\]
Since $t_{n(k)}$ and $t_{n(j)}$ are both restricted to the compact interval $[0,t_1]$ there exists convergent sub-subsequences such that
\begin{align*}
(t_{n(k(l))})\to t \text{ and } X_{t_{n(k(l))}+h} \to y \text{ and } X_{t_{n(k(l))}} \to x \\
(t_{n(j(i))})\to t \text{ and } X_{t_{n(j(i))}+h} \to y \text{ and } X_{t_{n(j(i))}} \to x
\end{align*} 
By continuity of $f$ we obtain $X_t=x$ and $X_{t+h}=y$ while $X_s=x$ and $X_{s+h}=z$. This contradicts time homogeneity.
\end{proof}

We complement the previous two propositions by an example of a two-dimensional Hunt-expandable functions which is not Feller-expandable.

\begin{example}
The following function $f:[0,\infty[\to\bbr^2$ is Hunt-expandable but not Feller-expandable:
\begin{align*}
f(t):=\sum_{n\in\bbn_0} & \Bigg(\left( \binom{1-\frac{1}{2^n}}{0} +\binom{0}{(-1)^n} (t-2n) \right) 1_{[2n,2n+1[}(t)  \\ 
&+\left( \binom{1-\frac{1}{2^n}}{(-1)^n} +\binom{\frac{1}{2^{n+1}}}{(-1)^{n+1}} (t-(2n+1)) \right) 1_{[2n+1,2n+2	[}(t) \Bigg)
\end{align*}
For every $u\in C_\infty(\bbr^2)$ which coincides with the projection on the second component on $[0,1]\times [-1,1]$ we obtain
\[
x_n:=X_{2n}^0=\binom{1-\left(\frac{1}{2} \right)^n }{0} \xrightarrow[n\to\infty]{}\binom{1}{0}
\]
but
\[
u(X_{2n+1}^0)=\begin{cases} 1 & \text{if } n \text{ is even,} \\
                            -1& \text{else.}
              \end{cases}
\]
Therefore $T_1(u(x_n))=u(X_1^{x_n})=u(X_{2n+1}^0)$ does not converge.
\end{example}

The generator $(A,D(A))$ is the strong right-hand side derivative of the semigroup $(T_t)_{t\geq 0}$ at zero.
A Feller process is called \emph{rich} if the domain $D(A)$ of this closed operator contains the test functions $C_c^\infty(\bbr^d)$. In \cite{mydiss} it was shown that every rich Feller process is an It\^o process. By Proposition 3.8 of \cite{detmp1} and the above reasoning we obtain the following:

\begin{proposition} A function $f:[0,\infty[\to \bbr$ is rich-Feller-expandable iff it is continuously differentiable on $]0,\infty[$, right-differentiable at zero and if there exists a $t_0\in [0,\infty]$ such that $\partial_t f(t) \neq 0$ on $[0,t_0[$ and $\partial_t f(t) = 0$ on $]t_0,\infty[$.
\end{proposition}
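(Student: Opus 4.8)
The plan is to pass to the language of symbols and reduce everything to the already established It\^o- and Hunt-expandability criteria together with the two external inputs the paper invokes: that every rich Feller process is an It\^o process (\cite{mydiss}) and the structural description of deterministic rich Feller processes given by Proposition 3.8 of \cite{detmp1}. For a deterministic process the only nonvanishing part of the symbol is the drift, so the symbol has the form $p(x,\xi)=-i\ell(x)\xi$, and the whole statement should follow by reading off which drifts $\ell$ can occur.

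For the \emph{necessity} direction I would start from a rich Feller process $X$ with $X^{f(0)}=f$. Since $X$ is an It\^o process, it is in particular It\^o-expandable, hence Hunt-expandable, so Corollary \ref{cor:huntexpone} already yields that $f$ is continuous, strictly monotone on some $[0,t_0[$ and constant on $[t_0,\infty[$; this produces the required $t_0$ and gives $\partial_t f=0$ on $]t_0,\infty[$. On the range the It\^o representation reads $f(t)=f(0)+\int_0^t\ell(f(s))\,ds$. The role of richness is that the symbol of a rich Feller process is continuous in the space variable, so $\ell$ is continuous on the range of $f$; composing with the continuous $f$ shows that $s\mapsto\ell(f(s))$ is continuous, whence $f$ is continuously differentiable on $]0,\infty[$, right-differentiable at $0$, and $\partial_t f=\ell\circ f$.

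It remains to rule out zeros of $\partial_t f$ on the moving part $[0,t_0[$, and this is the step I expect to be the main obstacle, because it is precisely where richness is strictly stronger than the plain Feller property of Proposition \ref{prop:Fellerexp} (which needs only monotonicity). A zero $\partial_t f(t^*)=0$ with $t^*\in[0,t_0[$ forces $\ell$ to vanish at the interior point $y^*=f(t^*)$ through which the flow nevertheless passes; the resulting drift has a non-Lipschitz, cusp-type zero, so the flow $\dot x=\ell(x)$ is no longer the solution singled out by the generator on $C_c^\infty(\bbr)$. I would make this precise through Proposition 3.8 of \cite{detmp1}, which ties the deterministic rich Feller processes to drifts for which this degeneracy does not occur. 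The boundary case $t^*=0$ is the cleanest instance: there the left extension of the generating path used in Proposition \ref{prop:Fellerexp} cannot be chosen so as to keep $\ell$ continuous at $f(0)$ unless $\partial_t f(0)\neq0$.

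For \emph{sufficiency} I would run the construction backwards. Continuity of $\partial_t f$ together with $\partial_t f\neq0$ on $[0,t_0[$ gives strict monotonicity there and, with the constancy after $t_0$, Hunt-expandability via Corollary \ref{cor:huntexpone}. Injectivity on $[0,t_0[$ makes $\ell:=\partial_t f\circ f^{\leftarrow}$ well defined on the range, and since $\partial_t f\neq0$ the inverse function theorem makes $f^{\leftarrow}$ continuously differentiable, so $\ell$ is continuous. I would then extend $\ell$ to a bounded, sufficiently smooth function on all of $\bbr$, matching the nonzero slope $\partial_t f(0)$ at the left endpoint $f(0)$ exactly as in the extension of $\Phi$ in Proposition \ref{prop:Fellerexp}, and invoke Proposition 3.8 of \cite{detmp1} to conclude that the symbol $p(x,\xi)=-i\ell(x)\xi$ generates a deterministic rich Feller process whose path from $f(0)$ is $f$. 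The only remaining points needing care are the vanishing-at-infinity requirement (F1) when the range is bounded and $t_0=\infty$, handled as in the proof of Proposition \ref{prop:Fellerexpddim}, and the verification $C_c^\infty(\bbr)\subseteq D(A)$, which is the content of the cited proposition.
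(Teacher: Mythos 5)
Your proposal takes essentially the same route as the paper: the paper offers no proof beyond the one sentence ``By Proposition 3.8 of \cite{detmp1} and the above reasoning,'' i.e.\ it reduces the statement to the classification of deterministic rich Feller processes in \cite{detmp1} together with the facts that rich Feller processes are It\^o processes (\cite{mydiss}) and the It\^o/Hunt expandability results of Section 2, which is exactly the reduction you carry out in more detail. Be aware, though, that the one step you yourself flag as the main obstacle --- that $\partial_t f$ cannot vanish at a point of $[0,t_0[$ past which the path keeps moving --- is not actually proved by your ``non-Lipschitz cusp'' heuristic (a drift such as $\ell(y)=3y^{2/3}$ with flow $(x^{1/3}+t)^3$ shows that a continuous drift vanishing at an interior point through which the path passes is not obviously incompatible with richness); both you and the paper import this step wholesale from Proposition 3.8 of \cite{detmp1}, so your argument is no less complete than the paper's, but it is not self-contained there.
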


Rich Feller processes are interesting, because by a classical result which is due to Courr\`ege \cite{courrege} one knows that their generator $A$ is a pseudo-differential operator. The symbol of this operator contains a lot of information about the corresponding process (cf. \cite{schilling98}, \cite{schillingschnurr}).

\begin{remark} For a function $f:[0,\infty[\to \bbr^d$ to be L\'evy-expandable it is necessary and sufficient to be of the form $t\mapsto f(0)+c\cdot t$ with $c\in\bbr^d$. This follows directly from the L\'evy-It\^o decomposition (cf. e.g. \cite{protter} Theorem I.42).
\end{remark}

\section{The General Structure of a Deterministic Markov Process}

Our starting point is the following result for Hunt processes (see \cite{detmp1} Section 2). We will see in the following that the generalization for deterministic processes with jumps is not straight forward and a new approach is needed. We will work one-dimensional since even in the Hunt case there is no possibility to generalize the result to higher dimensions. 

\begin{theorem} \label{thm:huntstructure}
A family of functions $t\mapsto X^x_t$, each mapping $[0,\infty[$ into $\bbr$, is a deterministic Hunt process if and only if there exists a decomposition of $\bbr$ into disjoint ordered intervals $(J_j)_{j\in Z}$ where $Z\subset \{-n,...,0,...,m\}$ with $n,m\in\bbn\cup \{\infty \}$ such that
on every Interval $J_j$ the functions $t\mapsto X^x_t$ (for $x\in J_j$) are either all constant or there exists a continuous generating path $\Phi_j:I_j\to J_j$ for $J_j$ which is surjective and either strictly monotonically increasing or strictly monotonically decreasing and such that
  \[
    X_t^x=\Phi_j(t+\Phi_j^{-1}(x)) \text{ for } x\in J_j \text{ and } t\in [0,\infty[ \, \cap \, (I_j-\Phi_j^{-1}(x))
  \]
and the $I_j$ are intervals containing zero.
\end{theorem}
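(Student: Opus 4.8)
The plan is to prove both implications, with the reverse (\emph{if}) direction being a direct verification and the forward (\emph{only if}) direction carrying the real content. For the reverse direction I would start from the data $(J_j,\Phi_j)$ and check that the $X$ defined by the flow formula is a deterministic Hunt process. On a non-constant interval the identity $X_t^x=\Phi_j(t+\Phi_j^{-1}(x))$ makes each path a time-shift of the continuous, strictly monotone $\Phi_j$; since $\Phi_j^{-1}(X_s^x)=s+\Phi_j^{-1}(x)$, time-homogeneity \eqref{timehom} is immediate, and distinct starting points in $J_j$ stay distinct until they are possibly absorbed simultaneously at a common endpoint, which is exactly the only situation compatible with the Markov property. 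On a constant interval everything is trivial, and because the $J_j$ partition $\bbr$ the pieces never interfere. Continuity of each path together with time-homogeneity is precisely quasi-left-continuity, so $X$ is Hunt.

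For the forward direction I would first record the per-path structure supplied by Corollary \ref{cor:huntexpone}: for every $x$ the path $X^x$ is continuous, strictly monotone on $[0,t_0^x[$, and constant on $[t_0^x,\infty[$. I attach to each $x$ a direction $d(x)\in\{+1,0,-1\}$ and set $F:=\{x:d(x)=0\}$ for the fixed points. The engine of the argument is a meeting lemma drawn purely from time-homogeneity: if $X_s^x=X_t^y=:w$ then $X_{s+h}^x=X_{t+h}^y$ for all $h\geq0$, and $w=X_s^x$ identifies $X^x_{s+\,\cdot\,}$ with $X^w$. From this I extract two facts: (i) if $w\notin F$ then in the monotone regime one of $x,y$ lies on the forward trajectory of the other, so the two paths are time-shifts of one another and trajectories neither branch nor cross by accident; and (ii) an increasing and a decreasing path can meet only at a fixed point, since otherwise the common continuation $X^w$ would have to be simultaneously increasing and decreasing.

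Using these facts I would build the decomposition. Declaring $x\sim y$ when the trajectories are time-shifts of one another, the intermediate value theorem applied to the continuous monotone path forces each equivalence class to be order-convex, hence an interval on which all paths flow monotonically in one common direction. Taking the maximal such flow intervals together with the maximal intervals contained in $F$ produces the partition $(J_j)$. To manufacture the generating path on a non-constant $J_j$ I fix a base point $x_0\in J_j$, set $\Phi_j(s):=X_s^{x_0}$ for $s\geq0$ to cover the forward range, and for $u>0$ define $\Phi_j(-u)$ to be the unique $z\in J_j$ with $X_u^z=x_0$. The shift identity $X_t^x=\Phi_j(t+\Phi_j^{-1}(x))$ then falls out of time-homogeneity exactly as in the proof of Theorem \ref{thm:markovexp}. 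Since $\Phi_j$ is monotone with image the interval $J_j$, it is automatically continuous, and its domain $I_j=\Phi_j^{-1}(J_j)$ is an interval containing $0$.

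I expect two places to be the real obstacles. The first is the clean construction of the backward flow: I must show that every $z\in J_j$ on the appropriate side of $x_0$ admits a unique time $u$ with $X_u^z=x_0$, and that gluing these backward pieces to $X^{x_0}$ yields a single-valued, strictly monotone, surjective $\Phi_j$; this is precisely where the meeting lemma and the strict monotonicity of the individual paths are indispensable. The second, more structural obstacle is the global organization of the $J_j$ into an \emph{ordered} family indexed by a subinterval of $\bbn$: each nondegenerate flow interval and each nondegenerate constant interval contains a rational, which yields countability, but one must still verify that the induced order is compatible with the integer indexing and control the accumulation of small pieces, in particular isolated fixed points squeezed between flow regions. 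It is this ordering statement, rather than the local flow picture, that I regard as the genuine crux.
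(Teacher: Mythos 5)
The paper itself offers no proof of this theorem: it is quoted verbatim from \cite{detmp1} (Theorem 2.11 there), so there is nothing to compare with line by line. On the merits, the local part of your outline is the right route and essentially forced: the per-path structure from Corollary \ref{cor:huntexpone}, the meeting lemma from \eqref{timehom}, the observation that two monotone paths meeting at a non-fixed point must be time-shifts of one another (while an increasing and a decreasing path can only meet at an absorbing point), order-convexity of the time-shift classes via the intermediate value theorem, and the assembly of $\Phi_j$ from the forward path of a base point plus the backward flow. The only local caveat is that $\Phi_j(-u)$ can be defined only for those $u$ admitting a preimage of $x_0$, which is automatically a down-closed set of times, so $I_j$ is an interval containing $0$; your phrasing ``for $u>0$ define'' should be adjusted accordingly, but this is cosmetic.

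The genuine gap is exactly the step you defer to the end, and it is more serious than ``controlling isolated fixed points squeezed between flow regions.'' Your countability argument selects a rational from each nondegenerate interval, but the maximal constant intervals are the maximal subintervals of the fixed-point set $F=\{x:X^x_t=x \text{ for all } t\}$, and nothing in the Hunt hypotheses prevents $F$ from being, say, the Cantor set, with a strictly monotone flow on each complementary interval (generated by an increasing homeomorphism of $\bbr$ onto that interval, so the paths never leave it). This is a legitimate deterministic Hunt process: all paths are continuous, time homogeneity holds because paths in distinct complementary intervals and paths at fixed points never meet. For such a process the maximal constant intervals are uncountably many singletons, and a flow interval $J_j$ cannot absorb a fixed point into its interior, since the shift formula would force $X_t^x=\Phi_j(t+\Phi_j^{-1}(x))\neq x$ for small $t>0$. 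Hence no decomposition into countably many ordered intervals of the stated kind exists, and the rational-selection argument cannot be repaired by any amount of bookkeeping. To finish the forward direction you must either identify the convention or additional hypothesis under which \cite{detmp1} excludes such fixed-point sets (e.g.\ a restriction on the structure sequence $\ldots|\odot|\oplus|\odot|\ldots$), or reformulate the constant part of the decomposition (for instance as a single closed set rather than countably many intervals). As it stands, the step you correctly identify as the crux is not merely unproved in your sketch; the argument you propose for it fails.
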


With every interval $J_j$ we associate the \emph{type} $\oplus$, $\ominus$ resp. $\odot$, if $\Phi$ is increasing, decreasing resp. the process is constant on the interval. This allows to describe the process from an abstract point-of-view as a sequence like $...|\odot|\oplus|\odot|\ominus|...$. This sequence is called the \emph{structure} of the process.
To emphasize that the upper endpoint of the lower interval belongs to the lower (resp. higher) interval we write $\odot]\oplus$ (resp. $\odot[\oplus$). The behavior of the paths of the deterministic Hunt process is totally described by the decomposition $(J_j)_{j\in Z}$ and the sequence of generating paths. Compare in this context Remark 2.13 of \cite{detmp1}. In particular we described there which structures are forbidden and by which conventions one gets a unique representation.

The theorem above can be used to construct deterministic Markov processes with jumps. A first idea is to just add a `jump structure' to a given Hunt process, i.e. a sequence $(a_n, b_n)$ with $a_n\in\bbr, b_n\in\bbr$ meaning that if 
\[
\lim_{s\uparrow t} X_s= a_n \text{ then } X_t = b_n  
\]
from the point $b_n$ the process moves on as the underlying Hunt process. In addition we could allow the process to jump from $a_n$ to $b_n^+$ resp. $b_n^-$ depending on whether the path was increasing before reaching $a_n$ or decreasing. Let us remark that this does not contradict the Markov property, since the process does not actually reach $a_n$. For a (one-dimensional) Hunt process there exist only the possibility to reach a point from above \emph{or} from below. Using such a jump structure one can construct several interesting examples of Markov processes. 

It would be nice if every deterministic Markov process could be described by the above construction. However, this is not the case. The following example shows that we do not even necessarily have an interval of monotonicity before a jump:
\begin{example} \label{ex:AtoD}
Let
\[
X_t^1:=(1-t)1_A(t)+(t-1) 1_B(t)+2\cdot 1_{\{1\}}(t) + (1+t) 1_C(t) + (3-t)1_D(t) + 3\cdot 1_{[2,\infty[}(t)
\]
where
\begin{align*} 
A:=& \bigcup_{n\in (2\bbn_0)} \left[ \frac{2^n-1}{2^n}, \frac{2^{n+1}-1}{2^{n+1}} \right[ & 
B:=&\bigcup_{n\in (2\bbn_0+1)} \left[ \frac{2^n-1}{2^n}, \frac{2^{n+1}-1}{2^{n+1}} \right[ \\
C:=& \bigcup_{n\in (2\bbn_0+1)} \left[ 1+\frac{1}{2^{n+1}}, 1+\frac{1}{2^{n}} \right[ &
D:=&\bigcup_{n\in (2\bbn_0)} \left[ 1+\frac{1}{2^{n+1}}, 1+\frac{1}{2^{n}} \right[
 \end{align*}
This path looks as follows:

\centerline{
\setlength{\unitlength}{1cm}
\begin{picture}(1,4)
\put(0,0){\vector(0,1){4}}
\put(0,1){\vector(1,0){2.1}}
\linethickness{0.5mm}
\put(0,2){\line(1,-1){0.5}}
\put(0.5,0.5){\line(1,1){0.25}}
\put(0.75,1.25){\line(1,-1){0.125}}
\put(0.875,0.875){\line(1,1){0.0625}}
\put(0.9375,1.0625){\line(1,-1){0.03125}}
\put(0.96875,0.96875){\line(1,1){0.015625}}
\put(1,0.8){1}
\put(2,0.8){2}
\put(-0.2,1.8){1}
\put(2,2){\line(-1,1){0.5}}
\put(1.5,3.5){\line(-1,-1){0.25}}
\put(1.25,2.75){\line(-1,1){0.125}}
\put(1.125,3.125){\line(-1,-1){0.0625}}
\put(1.0625,2.9375){\line(-1,1){0.03125}}
\put(1.03125,3.03125){\line(-1,-1){0.015625}}
\end{picture}
}
\end{example}
While the jumps before time 1 can be described by e.g. `reaching 1/2 from above, jump to -1/2' a description of this kind can not be given for the jump at time 1. 
This example shows that a deterministic Markov process is more than a deterministic Hunt process plus `jump structure'. The only way to describe this class of processes seems to be directly via the generating paths which do admit jumps and might look quite ugly as the example above shows. In Section 6 we include two more nasty examples. Example \ref{ex:loopjumps} shows that it is possible that from one starting point one might reach the same point $a_n$ in different ways (which are difficult to describe since there is no interval of monotonicity) resulting in different `landing points' $b_n^k$ $(k=1,2)$. This shows that even a start-point-dependent jump structure is not sufficient. In the case of Example \ref{ex:infinitelandingpoints} we have even infinitely many `landing points' $b_n^k$ $(k\in\bbn)$ for one $a_n$.  

Since it is not enough to use the generating paths of a Hunt process plus a jump structure we have to generalize the notion of generating paths itself, in order to capture the behavior of non-Hunt paths:

\begin{definition} \label{def:genpath}
Let $X$ be a deterministic Markov process and $I$ be an interval of the form $]a,\infty[$ respectively $[a,\infty[$ (with $a\in [-\infty,\infty[$ respectively $a\in ]-\infty,\infty[$). A surjective function $\Phi:I\to J$ is called a \emph{generating path} of $X$ on $J\subseteq \bbr^d$, if it is injective or jump-periodic or finally constant and \eqref{construction} holds for every $x\in J$ and $t\in [0,\infty[ \, \cap \, (I-\Phi^{-1}(x))$.
\end{definition}

\begin{remark}
Let us emphasize why generating paths are useful: think of a one-dimensional deterministic L\'evy process which is increasing. Using a function $f(y)=cy$, defined on $[0,\infty[$ as in the previous section, we can only describe the process for $x\in[0,\infty[$ by shifting this function to the left. In order to describe the whole process, we would need a sequence of such functions like: $f_n(y)=-n+cy$ (describing the process on $[-n,\infty[$). In contrast to this, \emph{one} generating path $\Phi(y):= cy$ defined on $\Phi:]-\infty,\infty[$ describes the whole process by shifting it to the left \emph{and} right.
\end{remark}

Even with the examples \ref{ex:AtoD}, \ref{ex:loopjumps} and \ref{ex:infinitelandingpoints} in mind one could still hope that a \emph{countable} number of generating paths is sufficient in order to describe the whole process. The following example shows that this is not the case, even if the process is a semimartingale with bounded increasing paths.

\begin{example} \label{ex:cantor}
Let $C$ be the Cantor set. It is well known that
\[\tag{$\star$}
C=\left\{ x\in [0,1]: \text{ there exists a representation } x=\sum_{j\in\bbn} a_j \left(\frac{1}{3}\right)^j \text{ with } a_j\in\{0,2\} \right\}
\]
and that every $y\in [0,1]$ can be written as $y=\sum_{j\in\bbn} b_j (1/3)^j$ with $b_j\in\{ 0,1,2\}$.
These representations are unique up to 0- respective 2-periods:
\[
\frac{1}{3}^j=2 \left(\frac{1}{3} \right)^{j+1} + 2 \left(\frac{1}{3} \right)^{j+2}+...
\]
Let $\widetilde{C}\subseteq C$ be all points in the Cantor set with a unique representation. Obviously $\widetilde{C}$ is uncountable since $C\backslash \widetilde{C}$ is countable. For every $x\in\widetilde{C}$ infinitely many $a_j$s are 0 and infinitely many $a_j$s are 2. $\widetilde{C}$ will be the set of our starting points. Now we define for every $x\in\widetilde{C}$ a path $X^x$ such that the paths have no common points: let $x\in \widetilde{C}$, $(q_n)_{n\in\bbn}\subseteq \bbq_+^*$ be a denumeration of $\bbq_+^*=\bbq\cap ]0,\infty[$ and $(h^x_n)_{n\in\bbn}\subseteq \bbn$ be a denumeration of the indices $j$ in the representation $(\star)$ such that $a_j=0$. Set
\[
X_t^x:=x+\sum_{n\in\bbn} \left(\frac{1}{3} \right)^{h^x_n} 1_{[q_n,\infty[}(t).
\]
Since the convergence of the series is is uniform, the function $t\mapsto X_t^x$ is c\`adl\`ag and by definition the function is strictly increasing (the function jumps upwards in every $q\in\bbq_+^*$). In particular the process is a semimartingale which is bounded since $X_t^x$ reaches only values in $[0,1]$. To be precise, $t\mapsto X_t^x$ takes only values $y$ which can be represented in the following way:
\[
y=\sum_{j\in\bbn} c_j \left( \frac{1}{3} \right) ^j \text{ such that } \begin{cases} c_j=2 & \text{if } a_j=2 \\ c_j\in \{0,1\} & \text{if } a_j=0. \end{cases}
\]
Since the representation $(\star)$ is unique (up to the excluded case of periods), we obtain that the paths are disjoint. In every point $z$ which is not reached by one of the paths we set $X_t^z=z$ for every $t\geq 0$. 
\end{example}

The example shows that the structure of a deterministic Markov process might be quite complicated. If we want to describe this structure we can not restrict ourselves to a countable number of generating paths (which is sufficient in the Hunt case). We can reduce ourselves to a subset of the set of all paths such that every point is contained in one of them, since time homogeneity is in place. Such a representation is by no means unique.   

\begin{remark}
A countable number of generating paths is sufficient, if the process is well behaved in the following sense: the image $f^x([0,\infty[)$ of every path can be written as the union of intervals $I^x_i$ such that there exist $t_1<t_2$ with
\[
f^x([t_1,t_2[) = I^x_i
\]
where $t_1$ is a jump time and $t_2$ is a jump time or the time where the path becomes constant and there is no jump time between $t_1$ and $t_2$. Though this definition of well behaved seems to be rather restrictive, it is met by every example in this paper except of \ref{ex:cantor} (and those of Section 1 which are not c\`adl\`ag). Since the paths have to be continuous between the times $t_1$ and $t_2$ defined as above we  obtain by time homogeneity that we have for $x\neq y$ and indices $i,j$
\[
I^x_i \cap I^y_j= \emptyset \text{ or } I^x_i \subseteq I^y_j \text{ or } I^y_j \subseteq I^x_i.
\]
This allows us to take a system $S$ of disjoint intervals such that every $I^x_i$ is either contained in $S$ or it is a subset of an element of $S$. This system is countable since every union of disjoint intervals of $\bbr$ is. On every such interval we get a generating path as in the proof of Theorem 2.11 of \cite{detmp1}. 
\end{remark}

\section{The Semimartingale Property}

In the present section we characterize the semimartingale property of a one-dimensional deterministic Markov process and show that such a result can not hold in dimensions $n\geq 2$. The following classical result is adapted from \cite{jacodshir} Proposition I.4.28:

\begin{proposition} \label{prop:detsemimg}
Let $X$ be a deterministic process. $X$ is a semimartingale iff $t\mapsto X_t^x$ is c\`adl\`ag and of finite variation on compact intervals for every $x\in\bbr^d$.
\end{proposition}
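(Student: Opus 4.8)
The statement has two implications, and the nontrivial content is the forward direction. For the \emph{if}-part, suppose each path $t\mapsto X_t^x$ is \cadlag{} and of finite variation on compacts. Since a deterministic process is adapted to every filtration, $X^x$ is then a \cadlag{} adapted process of finite variation, and any such process is trivially a semimartingale, with the decomposition $X^x = X_0^x + 0 + (X^x - X_0^x)$ in which the local martingale part vanishes. As this holds for every $x\in\bbr^d$, $X$ is a semimartingale.

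For the \emph{only if}-part, the plan is to exploit the determinism to convert the semimartingale property into a purely analytic bound on the increments. I would use the characterization of semimartingales as ``good integrators'': if $X^x$ is a semimartingale, then for each fixed $t$ the family of elementary stochastic integrals $\{(H\cdot X^x)_t : H \text{ simple predictable}, \norm{H}_\infty\le 1\}$ is bounded in probability (this is the easy half of the Bichteler--Dellacherie theorem; cf. \cite{protter}). The decisive observation is that for a deterministic $X^x$ it suffices to test this bound with \emph{deterministic} integrands.

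Fix $x$ and $t>0$, and for a partition $0=t_0<t_1<\dots<t_m=t$ set
\[
H_s := \sum_{i=0}^{m-1}\sign\big(X_{t_{i+1}}^x - X_{t_i}^x\big)\,1_{\loi{t_i}{t_{i+1}}}(s).
\]
This $H$ is a deterministic, hence predictable, simple process with $\norm{H}_\infty\le 1$, and by construction $(H\cdot X^x)_t = \sum_{i=0}^{m-1}\abs{X_{t_{i+1}}^x - X_{t_i}^x}$. Since $X^x$ does not depend on $\omega$, this integral is a real number, and boundedness in probability of a set of real numbers is exactly boundedness in $\bbr$. Taking the supremum over all partitions of $[0,t]$, the right-hand side increases to the total variation $\Var_{[0,t]}(X^x)$, which is therefore finite. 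Hence $X^x$ has finite variation on $[0,t]$, and as $t$ was arbitrary, on every compact interval.

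The main obstacle is choosing the right avatar of the semimartingale property: the canonical decomposition $X^x = X_0 + M + A$ is not unique, and neither $M$ nor $A$ need be deterministic even when $X^x$ is, so one cannot simply argue that ``the local martingale part of a deterministic semimartingale vanishes.'' The integrator characterization sidesteps this entirely, being independent of the chosen decomposition and interacting cleanly with the determinism. A secondary point to verify is that the elementary integrands $H$ above are genuinely admissible (left-continuous adapted simple processes in the standard form) and that their sign-weighted integrals do exhaust the total variation; both are routine once the framework is fixed.
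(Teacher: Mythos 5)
Your proof is correct, but there is nothing in the paper to compare it against: the proposition is cited from Jacod--Shiryaev, Proposition I.4.28, and no proof is given in the text. The standard route to the nontrivial direction is different from yours. By Stricker's theorem a semimartingale remains a semimartingale with respect to any smaller filtration to which it is adapted, and a deterministic process is adapted to the trivial filtration; relative to that filtration every process appearing in a decomposition $X^x=X_0^x+M+A$ is itself (a.s.) deterministic, a deterministic local martingale is constant (its localizing stopping times are constants and $M^{T_n}_s=\bbe[M^{T_n}_t]=M^{T_n}_t$), and so $X^x-X_0^x=A$ has finite variation. This disposes of the objection you raise --- that $M$ and $A$ need not be deterministic in the ambient filtration --- without abandoning the decomposition approach. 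Your alternative via the good-integrator half of Bichteler--Dellacherie is equally valid and has the merit of being decomposition-free: the deterministic sign integrands are admissible simple predictable processes, boundedness in probability of a family of constant random variables is just boundedness in $\bbr$, and the supremum of the resulting partition sums is the total variation by definition. Two small points you should make explicit: for $d\geq 2$ the argument must be run componentwise, since $\sign$ of a vector increment is undefined and finite variation of an $\bbr^d$-valued path means finite variation of each coordinate; and the \cadlag{} property in the only-if direction is not something to be derived but is part of the definition of a semimartingale. The if-part is as routine as you say.
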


Every one-dimensional deterministic \emph{Hunt} process is a semimartingale. This is not the case for Markov processes with jumps as the following example illustrates, in which we use the construction principle of the previous section:

\begin{example} \label{ex:cadlagmp}
Let $X$ be the deterministic Feller process with structure $\oplus|\odot|\ominus$. Let the $\odot$-domain be $\{0\}$, that is $X_t^0=0$, and the generating paths
\begin{align*}
  \Phi^{\ominus}&:]-\infty,1[\to]0,\infty[ &\text{ given by } \Phi^{\ominus}(t)&= 1-t\\
  \Phi^{\oplus} &:]-\infty,1[\to]-\infty,0[ &\text{ given by } \Phi^{\oplus}(t)&= t-1. 
\end{align*}
define the process on $]0,\infty[$ respectively on $]-\infty,0[$. Fix a starting point, say $x=1$, and add a jump structure such that the resulting path starting in $x$ jumps so often from $y$ to $-y$ ($y\in]-1,1[$) on the time interval $]0,1]$ that the jumps are not summable. If the only accumulation point of jump times is 1, this results in a c\`adl\`ag Markov process which is not a semimartingale, because the path starting in 1 is of infinite variation on $[0,1]$.
\end{example}

The following result characterizes when a deterministic one-dimensional Markov process is a semimartingale.

\begin{theorem} \label{thm:semimg}
A one-dimensional deterministic c\`adl\`ag Markov process $X$ is a semimartingale iff the jumps of every path $t\mapsto X_t^x$ $(x\in\bbr)$ are locally absolutely summable.
\end{theorem}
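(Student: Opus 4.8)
The plan is to reduce everything to Proposition \ref{prop:detsemimg}: since the paths are assumed \cadlag, the process $X$ is a semimartingale precisely when each path $t\mapsto X_t^x$ has finite variation on every compact interval. So I would prove the per-path equivalence: for fixed $x$, the path $f:=X^x$ has finite variation on compacts iff its jumps are locally absolutely summable. The forward implication is routine and uses nothing about the Markov property: for a \cadlag\ function of finite variation on $[0,T]$ one has $\sum_{s\le T}\abs{\Delta f(s)}\le \Var(f;[0,T])<\infty$, so the jumps are locally absolutely summable.

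The substance is the converse: assuming $\sum_{s\le T}\abs{\Delta f(s)}<\infty$ for every $T$, I must show $\Var(f;[0,T])<\infty$. The obvious danger is that absolute summability of jumps says nothing about the continuous motion, since a continuous path can have infinite variation. I would therefore split the variation into a jump part and a continuous part and bound each separately. The jump part is exactly $\sum_{s\le T}\abs{\Delta f(s)}$, finite by hypothesis; the whole difficulty is to bound the continuous part, and this is where the one-dimensional Markov structure must enter.

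The key structural input I would use is the fact, obtained as in the proof of Theorem 2.7 of \cite{detmp1}, that a path of a one-dimensional deterministic Markov process cannot move continuously to a value it has already visited: if it did, time homogeneity \eqref{timehom} would force a genuine continuous revisit and hence a forbidden continuous periodicity. Two consequences follow. First, on every interval of continuity $f$ is injective, hence, being continuous and one-dimensional, strictly monotone. Second, each maximal monotone continuity piece sweeps an open interval of values that is \emph{disjoint} from the values swept by every other piece, since a shared value would be reached continuously twice, the later time being a continuous revisit. As $f$ is \cadlag\ it is bounded on $[0,T]$, say $f([0,T])\subseteq[-M,M]$, so these pairwise disjoint swept intervals all lie in $[-M,M]$ and have total length at most $2M$. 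Since the continuous part of the variation is the sum of the lengths of these monotone excursions, it is at most $2M<\infty$; adding the jump part gives $\Var(f;[0,T])<\infty$, and Proposition \ref{prop:detsemimg} concludes. Equivalently one may phrase the bound through Banach's indicatrix: the no-continuous-revisit property says every level is crossed by the continuous motion at most once, so the continuous variation equals $\int N_c(v)\,dv\le \mathrm{length}(\mathrm{range})$.

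The step I expect to be the main obstacle, presumably the new technique promised in the abstract, is making the continuous-variation bound rigorous when the jump times accumulate or are even dense (as in Example \ref{ex:cantor}). In that case there is no clean decomposition of $[0,T]$ into monotone pieces, and one must argue that the continuous part of the variation is still fully captured, with no extra mass created at accumulation points of jump times, by the disjoint swept intervals. Finally, one-dimensionality is essential throughout: both the implication ``continuous and injective $\Rightarrow$ monotone'' and the no-continuous-revisit property fail in $\bbr^d$ for $d\ge 2$ (e.g.\ $t\mapsto(\sin t,\cos t)'$ returns to its starting point continuously), which is exactly why the criterion admits no analogue in higher dimensions.
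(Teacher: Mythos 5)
Your reduction is the same as the paper's: by Proposition \ref{prop:detsemimg} everything comes down to showing, for a fixed path, that local absolute summability of the jumps forces finite variation on compacts, and the forward direction is indeed routine. The gap is exactly where you yourself suspect it, and it is not a technicality that can be argued away at the end: your bound on the continuous part of the variation rests on a decomposition of $[0,T]$ into maximal intervals of continuity, on each of which the path is monotone and sweeps a value-interval disjoint from all the others. When the jump times are dense in $[0,T]$ --- which already happens for the semimartingale of Example \ref{ex:cantor}, and which is precisely the regime in which the theorem has content --- there are \emph{no} nontrivial intervals of continuity, hence no monotone pieces and no swept intervals, and the quantity you call the ``continuous part of the variation'' is not the sum of lengths of such pieces. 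The only available substitute is the decomposition $h=f+g$ of the path into a continuous part $f$ and a pure-jump part $g$ (well defined here because the jumps are summable, so $g$ has finite variation); but $f$ is not a path of the process, and neither injectivity nor the no-continuous-revisit property transfers from $h$ to $f$. So neither the disjoint-swept-intervals argument nor a Banach indicatrix applied to ``the continuous motion'' (your $N_c$ is never defined for a function with dense jump times) gives a bound on $\Var(f)$.

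What is actually needed, and what the paper isolates as Theorem \ref{thm:analysis}, is the purely real-analytic statement that an \emph{injective} c\`adl\`ag function with absolutely summable jumps is of finite variation. Its proof is the ``new technique'' announced in the abstract: assuming $\Var(f)=\infty$, one rearranges the time axis so that the oscillation of $f$ organizes into finitely many ``hills'' (pieces on which $f$ returns to its starting value after a large excursion) of total height exceeding $4C_g$, where $C_g=\sum\abs{\Delta g}$; Lemma \ref{lem:rearrange} then guarantees that on at least one hill the jumps of $g$ are too small to prevent $f+g$ from taking a value twice, contradicting the injectivity of $h$. Your level-crossing intuition is not hopeless --- for an injective c\`adl\`ag path each level is \emph{attained} at most once, and the levels jumped over integrate to exactly $\sum\abs{\Delta h}$ --- but making that rigorous requires controlling the crossings of a level $v$ realized only through left limits $h(t-)=v$ at jump times, and in any case it is a different argument from the one you wrote down. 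As it stands, your proof covers only the ``well behaved'' case of the Remark in Section 3 (isolated jump times), where the conclusion is easy; the dense-jump case, which is the substance of Theorems \ref{thm:semimg} and \ref{thm:analysis}, is left open.
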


This result is wrong for dimensions $d\geq 2$. Just take the function $g:[0,\infty[\to\bbr^2$ given by $g(t)=(f(t),t)'$ where $f(t)$ is a \emph{continuous} function of infinite variation on compacts. The drift in the second variable ensures that the path is injective. Theorem \ref{thm:markovexp} shows that this function is Markov expandable (setting $X_t^x=x$ for every $t\geq 0$ and every $x$ outside the range of $g$). However, since one path is of infinite variation, the process, though it does not have jumps at all, is still no semimartingale. 

In order to prove this theorem we need the following result which belongs to one-dimensional analysis. Even in detailed accounts on the interplay between properties of real valued functions like \cite{muntean}, we have not found it. Therefore, we provide a proof. 

\begin{theorem} \label{thm:analysis}
A function $h:[0,T]\to \bbr$, which is c\`adl\`ag and injective, is of finite variation iff its jumps are absolutely summable.
\end{theorem}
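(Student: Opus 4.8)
The plan is to prove the two implications separately, the forward one being routine and the converse carrying all the difficulty. \textbf{Easy direction.} If $h$ has finite variation, then for any finitely many jump times $t_1<\dots<t_k$ one has $\sum_j \abs{\Delta h(t_j)}\le\Var(h)$, obtained by approximating each $\abs{\Delta h(t_j)}$ through $\abs{h(t_j)-h(t_j-\delta)}$ and reading off the result from a single partition sum; letting $k\to\infty$ gives absolute summability. \textbf{Converse: setup.} Assume the jumps are absolutely summable and write $h=h_d+h_c$, where $h_d(t):=\sum_{s\le t}\Delta h(s)$ is the cumulative jump function and $h_c:=h-h_d$. Summability makes $h_d$ \cadlag\ with $\Var(h_d)=\sum_t\abs{\Delta h(t)}=:V<\infty$ and $\Delta h_d=\Delta h$, so $h_c$ is continuous. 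Since $\Var(h)\le\Var(h_c)+V$, it suffices to show $\Var(h_c)<\infty$; crucially, injectivity is available for $h$, not for $h_c$. Being \cadlag\ on the compact interval $[0,T]$, the function $h$ is bounded, so its range lies in an interval of some finite length $L$.

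\textbf{Converse: reduction to a band.} I argue by contradiction and assume $\Var(h_c)=\infty$. The key is to extract a single \emph{band} $[c,d]$ with $d-c=:\varepsilon>0$ that is crossed infinitely often, i.e. pairwise disjoint time intervals $[\sigma_i,\tau_i]$ on each of which $h_c$ runs from $c$ to $d$, and hence covers $[c,d]$ by the intermediate value theorem. To produce such a band I would locate a time $t^\ast$ at which the variation of $h_c$ accumulates (every neighbourhood of $t^\ast$ carries infinite variation); by continuity $h_c$ is confined to an arbitrarily thin value-window around $h_c(t^\ast)$ on a small neighbourhood of $t^\ast$, so that infinite variation is trapped inside a bounded band. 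Feeding this into the Banach indicatrix identity $\Var(h_c)=\int_\bbr N(y)\,dy$, with $N(y)$ the cardinality of $\{t:h_c(t)=y\}$, forces a positive-measure set of levels to be attained infinitely often, from which the middle-third band $[c,d]$ and its infinitely many disjoint crossings can be selected.

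\textbf{Converse: the injectivity--summability contradiction.} Given such crossings, set $A_i:=h([\sigma_i,\tau_i])$; by injectivity of $h$ and disjointness of the $[\sigma_i,\tau_i]$ the sets $A_i$ are pairwise disjoint. Superadditivity of variation over disjoint intervals gives $\sum_i\Var(h_d;[\sigma_i,\tau_i])\le V$, so $\Var(h_d;[\sigma_i,\tau_i])<\varepsilon/4$ for all but finitely many $i$. Fix such an $i$ and put $K_i:=\bigl]\,c+\max_{[\sigma_i,\tau_i]}h_d,\ d+\min_{[\sigma_i,\tau_i]}h_d\,\bigr[$, an interval of length at least $\varepsilon-\Var(h_d;[\sigma_i,\tau_i])\ge 3\varepsilon/4$. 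For $w\in K_i$ the \cadlag\ function $h-w$ satisfies $(h-w)(\sigma_i)<0<(h-w)(\tau_i)$, so it either vanishes somewhere on $[\sigma_i,\tau_i]$, giving $w\in A_i$, or it jumps across zero, placing $w$ in a jump interval of $h$ on $[\sigma_i,\tau_i]$. These jump intervals have total length at most $\Var(h_d;[\sigma_i,\tau_i])<\varepsilon/4$, so $A_i$ contains a Borel set $B_i\subseteq K_i$ with $\lambda(B_i)\ge 3\varepsilon/4-\varepsilon/4=\varepsilon/2$. The $B_i$ are disjoint and lie in the range of $h$, whence $\sum_i\lambda(B_i)\le L<\infty$; but $\lambda(B_i)\ge\varepsilon/2$ for infinitely many $i$, a contradiction. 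Hence $\Var(h_c)<\infty$ and $h$ has finite variation.

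\textbf{Main obstacle.} The genuinely delicate step is the band extraction: passing from ``$h_c$ has infinite variation'' to ``a fixed band of positive width is crossed infinitely often.'' Infinite variation could a priori be produced by ever-thinner oscillations, so the confinement-at-an-accumulation-point argument together with the Banach indicatrix is exactly what guarantees a band of positive width, and this is where I expect the real work to lie. Once such a band is secured, the injectivity--summability balancing of the last paragraph is robust and essentially computational.
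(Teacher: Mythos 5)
Your overall architecture for the hard direction --- decompose $h=h_c+h_d$, argue by contradiction, and contradict injectivity by producing infinitely many pairwise disjoint subsets of the bounded range of $h$, each of Lebesgue measure at least $\varepsilon/2$ --- is genuinely different from the paper's, and the final ``injectivity--summability'' computation is correct and rather elegant as far as it goes. The gap sits exactly where you locate the main obstacle: the band extraction. The inference ``$\int N(y)\,dy=\infty$ over a thin window forces a positive-measure set of levels to be attained infinitely often'' is false, and so is the existence of the band itself. An integer-valued indicatrix can have infinite integral while being finite at every level (think of $N(y)$ comparable to $\lceil 1/y\rceil$ near $0$). Concretely, let $h_c$ perform $n^3$ continuous zigzags between the levels $\tfrac{1}{n+1}$ and $\tfrac{1}{n}$ on the $n$-th of a sequence of disjoint time intervals accumulating at a point $t^\ast$, joined monotonically between blocks. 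Then $\Var(h_c)\geq\sum_n n^3\big(\tfrac1n-\tfrac1{n+1}\big)=\infty$, the variation accumulates only at $t^\ast$, and $h_c$ is confined to arbitrarily thin value-windows near $t^\ast$, yet every level is attained only finitely often and every band $[c,d]$ with $d>c$ is crossed only finitely often, since $h_c$ eventually stays below $d$. So no single band of positive width crossed infinitely often need exist, and your contradiction cannot be launched from the hypotheses you use (infinite variation of $h_c$ plus continuity); you would have to bring the injectivity of $h$ into the extraction step itself, which the proposal does not do.

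The underlying difficulty is that infinite variation can be spread over infinitely many oscillations of shrinking amplitude occurring at drifting levels, so that no fixed band sees more than finitely many of them; any correct proof must aggregate the jump cost over all of these small excursions rather than isolate one band. This is what the paper's argument does: it partitions and rearranges the continuous part into finitely many ``hills'' whose heights sum to more than $4C_g+C_f$, and Lemma \ref{lem:rearrange} shows that each hill of height $\delta$ needs at least $\delta/2$ of jump mass to be made injective, so some hill is left with too little and $h$ fails to be injective there. Your measure-theoretic endgame could plausibly be salvaged by running it excursion-by-excursion instead of on a single band (excursions with small interior jump mass contribute disjoint sets of measure $\gtrsim\delta_j/2$ to the bounded range, the others force $\sum\delta_j/4\leq C_g$), but isolating those excursions with $\sum_j\delta_j=\infty$ is precisely the combinatorial work the paper's steps 1--3 carry out. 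As written, the proof does not close.
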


While the `only if'-part of the theorem is well known (injectivity is not even needed for this direction) the `if'-part is more involved. We need the following two lemmas for to prove this direction. The proof of the first one is elementary, while the second one can be found e.g. in \cite{natanson} Corollary VIII.3.1.

\begin{lemma} \label{lem:rearrange}
Let $a<c$ and $f:[a,c[\to\bbr$ be a continuous function such that $f(a)=f(c-)$. Furthermore let $b\in[a,c[$ with $f(a)\neq f(b)$. Let $g:[a,c[\to \bbr$ be a c\`adl\`ag function which is pure jump, that is
\[
g(x)=\sum_{a\leq s\leq x} g(s)-g(s-).
\]
If $\sum_{a\leq s \leq c} \abs{\Delta g(s)} < (1/2)\cdot \abs{f(a)-f(b)}$ then $f+g$ is not injective.
\end{lemma}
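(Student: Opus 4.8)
The plan is to reduce the statement to a one-level intermediate-value assertion and then to overcome the discontinuity of $h := f+g$ by a measure-theoretic ``budget'' argument on the levels. First I would normalize: since the representation $g(x) = \sum_{a \le s \le x} \Delta g(s)$ forces $g(a-)=0$, every value of $g$ is controlled by the total jump mass, $\abs{g(x)} \le \varepsilon := \sum_{a \le s \le c}\abs{\Delta g(s)} < \tfrac12\abs{f(a)-f(b)}$ for all $x$, and absolute summability makes $g(c-)=\sum_{a\le s<c}\Delta g(s)$ exist, so $h(c-)$ exists too. Assume w.l.o.g. $f(b)>f(a)$ and set $d:=f(b)-f(a)>0$ (note $b\in\oi{a}{c}$ since $f(a)\neq f(b)$). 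Evaluating $h$ at the three distinguished points and bounding the increments of $g$ by partial jump sums gives $h(b)-h(a) > d-\tfrac d2 = \tfrac d2$ and, using $f(c-)=f(a)$, likewise $h(b)-h(c-) > \tfrac d2$. Hence, writing $\Lambda := h(b)-\max(h(a),h(c-))$, the open band of levels $\mathcal I := \oi{\max(h(a),h(c-))}{h(b)}$ has length $\Lambda > \tfrac d2$.

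The heart of the argument is an elementary observation about \cadlag\ functions: if $\varphi$ runs on a compact interval from a value below a level $L$ to a value above $L$ and never \emph{attains} $L$, then some jump of $\varphi$ must straddle $L$; consequently the set of such ``skipped'' levels is covered by the open jump-intervals $\oi{\varphi(s-)\wedge\varphi(s)}{\varphi(s-)\vee\varphi(s)}$ and has total length at most $\sum_s\abs{\Delta\varphi(s)}$. I would apply this to $h$ on $[a,b]$ (running from $h(a)$, below every $L\in\mathcal I$, up to $h(b)$, above) and to $h$ on $[b,c]$ (extended by $h(c):=h(c-)$, running from $h(b)$ down past $h(c-)$). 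Since $f$ is continuous, $\Delta h=\Delta g$, so the levels of $\mathcal I$ \emph{not} attained on $[a,b]$ form a set of measure at most $\sum_{a\le s\le b}\abs{\Delta g(s)}$, and those not attained on $[b,c[$ have measure at most $\sum_{b<s<c}\abs{\Delta g(s)}$.

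These two exceptional sets have combined measure at most $\varepsilon < \tfrac d2 < \Lambda$, so the set of levels in $\mathcal I$ attained by $h$ on \emph{both} $[a,b]$ and $[b,c[$ has positive measure and is therefore nonempty. Picking such a level $L$ produces $x_1\in[a,b]$ and $x_2\in[b,c[$ with $h(x_1)=h(x_2)=L$; since $L<h(b)$ forces $x_1\neq b\neq x_2$ and $L>h(c-)$ forces $x_2\neq c$, we get $x_1\neq x_2$, so $h=f+g$ is not injective. The main obstacle is exactly that $h$ is discontinuous, so no prescribed level is guaranteed to be hit by a na\"ive intermediate-value argument; this is resolved by weighing the length $>\tfrac d2$ of the available band of levels against the total jump ``budget'' $\varepsilon<\tfrac d2$, which leaves a positive-measure reservoir of levels genuinely attained on each side.
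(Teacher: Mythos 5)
Your proof is correct. There is nothing in the paper to compare it against: the author explicitly declines to prove this lemma (``the proof of the first one is elementary'' and is left to the reader), so your argument fills a genuine gap rather than duplicating one. The skeleton is sound: from $g(x)=\sum_{a\le s\le x}\Delta g(s)$ you get $|g(b)-g(a)|\le\varepsilon<d/2$ and $|g(b)-g(c-)|\le\varepsilon<d/2$, hence a band of levels of length $\Lambda>d/2$ strictly between $h(b)$ and $\max(h(a),h(c-))$; and the crossing observation for \cadlag\ functions is valid --- if $\varphi$ on a compact interval starts below and ends above a level $L$ that it never attains, a first-crossing argument using right-continuity produces a jump time $\tau$ with $\varphi(\tau-)\le L<\varphi(\tau)$ (or the symmetric inequality for a downcrossing), so the skipped levels are covered by the jump intervals and have measure at most $\sum_s|\Delta\varphi(s)|$. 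Since $\Delta h=\Delta g$ by continuity of $f$, the two exceptional sets on $[a,b]$ and $[b,c]$ together have measure at most $\varepsilon<\Lambda$, and your endpoint bookkeeping ($L<h(b)$ excludes $x_1=b=x_2$, $L>h(c-)$ excludes $x_2=c$) correctly yields two distinct points of $[a,c[$ with the same $h$-value. Two cosmetic remarks only: the skipped levels are covered by the \emph{closed} jump intervals $\ci{\varphi(\tau-)\wedge\varphi(\tau)}{\varphi(\tau-)\vee\varphi(\tau)}$ rather than the open ones (a skipped level may equal the left limit $\varphi(\tau-)$, as for $\varphi(t)=t\cdot 1_{[0,1[}(t)+2\cdot 1_{[1,2]}(t)$ and $L=1$), which changes nothing in the measure bound; and the hypothesis' sum over $a\le s\le c$ must be read as a sum over $[a,c[$, which is how you use it.
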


\begin{lemma} \label{lem:separate}
Let $a<b<c$. A function $f:[a,c]\to \bbr$ is of finite variation on $[a,c]$ iff it is of finite variation on $[a,b]$ and $[b,c]$.
\end{lemma}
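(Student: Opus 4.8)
The plan is to prove the sharper quantitative identity
\begin{align*}
\Var(f;[a,c]) = \Var(f;[a,b]) + \Var(f;[b,c]),
\end{align*}
where $\Var(f;[\alpha,\beta])$ denotes the total variation, i.e. the supremum of $\sum_{i=1}^n \abs{f(t_i)-f(t_{i-1})}$ taken over all finite partitions $\alpha=t_0<t_1<\dots<t_n=\beta$. The equivalence asserted in the lemma is then immediate, since each summand on the right-hand side is finite precisely when the left-hand side is.

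First I would establish the inequality $\Var(f;[a,c]) \leq \Var(f;[a,b]) + \Var(f;[b,c])$. Starting from an arbitrary partition $a=t_0<\dots<t_n=c$, insert the point $b$ whenever it is not already among the $t_i$. By the triangle inequality, refining a partition never decreases the associated sum, so the sum for the original partition is bounded by the sum for the refined one. The refined partition splits at $b$ into a partition of $[a,b]$ and a partition of $[b,c]$, whence its sum is at most $\Var(f;[a,b])+\Var(f;[b,c])$. Taking the supremum over all partitions of $[a,c]$ yields the claim.

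For the reverse inequality $\Var(f;[a,b]) + \Var(f;[b,c]) \leq \Var(f;[a,c])$, I would take an arbitrary partition $P_1$ of $[a,b]$ and an arbitrary partition $P_2$ of $[b,c]$ and concatenate them at $b$ into a single partition $P$ of $[a,c]$. The sum over $P$ equals the sum over $P_1$ plus the sum over $P_2$, and is bounded by $\Var(f;[a,c])$; taking the supremum first over $P_1$ and then over $P_2$ gives the inequality. Combining the two inequalities produces the identity. In particular, extending a partition of $[a,b]$ by appending $c$ (and likewise for $[b,c]$) shows $\Var(f;[a,b])\leq \Var(f;[a,c])$ directly, so finiteness on $[a,c]$ forces finiteness on each subinterval.

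There is no genuine obstacle here: the whole argument rests on the single observation that inserting a point into a partition can only increase the variation sum (triangle inequality), together with the fact that partitions of adjacent intervals concatenate to a partition of their union. The only point requiring a line of care is the insertion step, where one must note that, regardless of whether $b$ already belongs to the chosen partition, the refinement inequality is valid. This is exactly the content cited from \cite{natanson}, and reproducing it in two short inequalities keeps the section self-contained.
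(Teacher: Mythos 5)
Your proof is correct and is the standard additivity argument for total variation; the paper does not prove this lemma itself but simply cites Natanson (Corollary VIII.3.1), and your two inequalities (refinement by inserting $b$, and concatenation of partitions at $b$) are exactly the content of that reference. Nothing to fix.
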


\begin{proof}[Proof of Theorem \ref{thm:analysis}]
Let $h:[0,T]\to \bbr$ be an injective c\`adl\`ag function of bounded variation. Divide this function into two increasing c\`adl\`ag functions:
$h=h^+ - h^-$ such that $h^+$ and $h^-$ do not jump at the same time.
We obtain
\begin{align*}
\sum_{\stackrel{0\leq s \leq T}{\abs{\Delta h(s)}>0}} \abs{\Delta h(s)}&= 
 \sum_{\stackrel{0\leq s \leq T}{\abs{\Delta h^+(s)}>0}} \abs{\Delta h^+(s)}
 + \sum_{\stackrel{0\leq s \leq T}{\abs{\Delta h^-(s)}>0}} \abs{h^-(s)}\\
 &\leq (h^+(T) - h^+(0))+ (h^-(T) - h^-(0)). \\
 &<\infty.
\end{align*}
Now suppose the theorem was false. Then we could find an injective c\`adl\`ag function $h:[0,T]\to\bbr$, with absolutely summable jumps, which is of infinite variation. Divide the c\`adl\`ag function $h$ into its continuous part $f$ and its pure jump part $g$. $g$ is of finite variation, since the jumps of $h$ are absolutely summable on $[0,T]$. Therefore, $f$ has to be of infinite variation on $[0,T]$, because the functions of finite variation form a vector space. Hence, there exists a sequence of partitions $\pi_n:= (0=t_1^n < t_2^n<...<t_{k(n)}^n=T)$ of $[0,T]$ such that
\begin{align} \label{partition}
\sum_{t_j^n,t_{j+1}^n \in \pi_n} \abs{f(t_{j+1}^n) - f(t_j^n)} \xrightarrow[n\to\infty]{}\infty.
\end{align}
Since the jumps of $h$ are absolutely summable, there exists a constant $C_g>0$ such that $\sum_{\abs{\Delta g(s)} > 0} \abs{\Delta g(s)}=C_g <\infty$ and since $f$ is continuous, there exists a constant $C_f>0$ such that $\abs{f}$ is bounded by $C_f$. By \eqref{partition} we obtain that there exists an $N\in\bbn$ such that 
\[
\sum_{t_j^N,t_{j+1}^N \in \pi_N} \abs{f(t_{j+1}^N) - f(t_j^N)} > 4C_g+C_f.
\]
This $N$ is fixed from now on and we write $t_j:=t_j^N$ ($j=1,...,k$) in order to simplify notation. W.l.o.g we assume that $f(0)=0$ and $f\geq 0$. If these properties are not met, we can shift the function (up/down) and analyze the positive and negative part separately. Furthermore we assume w.l.o.g that for every $j$ we have $f(t_{j})-f(t_{j-1})\neq 0$ and
\[
f(t_{j})-f(t_{j-1})<0 \Leftrightarrow f(t_{j+1})-f(t_j)>0,
\]
that is, there are no `useless points' in the partition $\pi_N$. The idea is now to rearrange $f$ (and hence the original function $h$) in such a way that lemma \ref{lem:rearrange} is applicable.\\ 
\emph{step 1: introduce new points into the partition} \\
We set 
\begin{align}
s_1^l&:=t_1 \nonumber \\
s_1^r&:=t_3 \label{twofurther}\\ 
s_2^l&:=\min \{t>s_1^r:f(t)=f(s_1^r) \} \nonumber \\
s_2^r&:=\min \{t_k\in \pi_N: t_k > s_2^l \} \nonumber \\
s_3^l&:=\min \{t>s_2^r:f(t)=f(s_2^r) \} \nonumber \\
... \nonumber
\end{align}
Stop this procedure when reaching the last point $T=t_k$ or the value of $s_j^r$ is zero. In the latter case start the procedure again in $s_{j+1}^l:=s_j^r$ to built up the next `hill'. We call the function $f$ restricted to the set $[s_1^l, s_1^r[\cup ...\cup [s_j^l, s_j^r[ $ a hill though it is not the function which looks like a hill, but the traverse line through the points $(s_1^l,f(s_1^l))$, ..., $(s_1^l,f(s_1^l))$. We continue with the next hill until finally reaching 
\[
T_{new}:=\begin{cases} T & \text{, if} f(t_k)<f(t_{k-1}) \\
                       t_{k-1} &\text{, else.} \end{cases}
\] 
This finishes the first hierarchical level. We have to introduce $T_{new}$, since otherwise, starting from $T$, line \eqref{twofurther} is not defined (we do not need the next, but the next but one point).\\
We left several gaps between the intervals. Now we treat every interval $[s_j^r,s_{j+1}^l]$ in the same way as above, that is, 
\begin{align*}
s_{j1}^l&:=s_1^r\\
s_{j1}^r&:=\min \{t_{k+1}\in \pi_N: t_k > s_{j1}^l \} \\
s_{j2}^l&:=\min \{t>s_{j1}^r:f(t)=f(s_{j1}^r) \} \\
s_{j2}^r&:=\min \{t_k\in \pi_N: t_k > s_{j2}^l \} \\
s_{j3}^l&:=\min \{t>s_{j2}^r:f(t)=f(s_{j2}^r) \} \\
...
\end{align*}
until we reach $s_{j+1}^l$. Of course, it can happen that $s_{j1}^r=s_{j+1}^l$. In this case a third hierarchical level is not needed on the interval $[s_j^r,s_{j+1}^l]$. If there are intervals in the second hierarchical level, where we have to introduce new points $s_{ij}^r$ and $s_i(j+1)^l$ then we treat the interval $[s_{ij}^r, s_i(j+1)^l]$ again as above. We continue with this procedure until the union of all intervals (of all hierarchical levels) covers $[0,T_{new}]$. Note that the number of additional points is finite. It is simple to prove that an upper bound is given by $k^2$, but one can prove better estimates.\\
\emph{step 2: the last hill remains unfinished} \\
Let $s_M^l$ be the maximum of interval-endpoints in the first hierarchical level with the property $f(s_M^l)=0$. If $f(T_{new})\neq 0$ we do not have the property $f(s_M^l)=f(T_{new})$. In order to bring Lemma \ref{lem:rearrange} into account, we introduce a new intermediate point:
\[
f(s_M^m) := \inf\{t>s_M^l: f(t)=f(T_{new}) \}
\]
This gives us two new intervals: one with no intermediate points and the property $f(s_M^l)<f(s_M^m)$ and the second one, $[s_M^m,s_M^r]$ with the property $f(s_M^m)=f(s_M^r)$. The last interval on each hierarchical level might have to be treated in this way. However, the sum of all differences $f(s_M^m)-f(s_M^l)$ plus $f(T)-f(T_{new})$ is bounded by $C_f$.

\scalebox{1} 
{
\begin{pspicture}(0,-2.8629167)(12.665833,2.8629167)
\rput(0.66583335,-2.1370833){\psaxes[linewidth=0.04,arrowsize=0.05291667cm 2.0,arrowlength=1.4,arrowinset=0.4,labels=none,ticksize=0.10583333cm]{->}(0,0)(0,0)(12,5)}
\psline[linewidth=0.04cm](0.6458333,-2.1570833)(1.6458334,0.86291665)
\psline[linewidth=0.04cm](1.6658334,0.86291665)(2.6058333,-0.09708334)
\psline[linewidth=0.04cm](2.6058333,-0.077083334)(3.6258333,1.8829167)
\psline[linewidth=0.04cm](3.6458333,1.8829167)(6.6858335,-2.1170833)
\psline[linewidth=0.04cm](6.6858335,-2.0970833)(9.645833,-0.11708333)
\psline[linewidth=0.04cm](9.6258335,-0.11708333)(10.605833,-1.1170833)
\psline[linewidth=0.04cm](10.6258335,-1.1170833)(11.665833,0.86291665)
\pscustom[linewidth=0.04]
{
\newpath
\moveto(3.6058333,1.8629167)
\lineto(3.6558332,1.7429167)
\curveto(3.6808333,1.6829166)(3.7158334,1.5979167)(3.7258334,1.5729166)
\curveto(3.7358334,1.5479167)(3.7558334,1.5029167)(3.7658334,1.4829167)
\curveto(3.7758334,1.4629166)(3.7908332,1.4179167)(3.7958333,1.3929167)
\curveto(3.8008332,1.3679167)(3.8208334,1.3279166)(3.8358333,1.3129166)
\curveto(3.8508334,1.2979167)(3.8708334,1.2629167)(3.8758333,1.2429167)
\curveto(3.8808334,1.2229167)(3.8858333,1.1729167)(3.8858333,1.1429167)
\curveto(3.8858333,1.1129167)(3.8958333,1.0629166)(3.9058332,1.0429167)
\curveto(3.9158332,1.0229167)(3.9308333,0.97791666)(3.9358332,0.9529167)
\curveto(3.9408333,0.92791665)(3.9508333,0.8779167)(3.9558334,0.85291666)
\curveto(3.9608333,0.8279167)(3.9708333,0.78291667)(3.9758334,0.7629167)
\curveto(3.9808333,0.74291664)(3.9958334,0.7129167)(4.005833,0.7029167)
\curveto(4.0158334,0.6929167)(4.0408335,0.6929167)(4.0558333,0.7029167)
\curveto(4.070833,0.7129167)(4.090833,0.74791664)(4.0958333,0.7729167)
\curveto(4.1008334,0.79791665)(4.130833,0.81291664)(4.1558332,0.80291665)
\curveto(4.1808333,0.79291666)(4.2058334,0.7529167)(4.2058334,0.72291666)
\curveto(4.2058334,0.6929167)(4.2058334,0.6379167)(4.2058334,0.61291665)
\curveto(4.2058334,0.5879167)(4.2058334,0.53791666)(4.2058334,0.5129167)
\curveto(4.2058334,0.48791668)(4.215833,0.47791666)(4.2258334,0.49291667)
\curveto(4.235833,0.5079167)(4.255833,0.53291667)(4.2658334,0.54291666)
\curveto(4.275833,0.55291665)(4.295833,0.53291667)(4.3058333,0.5029167)
\curveto(4.315833,0.47291666)(4.3308334,0.41791666)(4.3358335,0.39291668)
\curveto(4.340833,0.36791667)(4.3558335,0.32791665)(4.3658333,0.31291667)
\curveto(4.3758335,0.29791665)(4.400833,0.25791666)(4.4158335,0.23291667)
\curveto(4.4308333,0.20791666)(4.4558334,0.15791667)(4.465833,0.13291666)
\curveto(4.4758334,0.10791667)(4.4958334,0.06791666)(4.505833,0.052916665)
\curveto(4.5158334,0.037916664)(4.5408335,0.002916665)(4.5558333,-0.017083336)
\curveto(4.570833,-0.037083335)(4.5958333,-0.07208333)(4.6058335,-0.08708333)
\curveto(4.6158333,-0.10208333)(4.6408334,-0.13208334)(4.6558332,-0.14708334)
\curveto(4.670833,-0.16208333)(4.7308335,-0.17208333)(4.775833,-0.16708334)
\curveto(4.820833,-0.16208333)(4.8958335,-0.16208333)(4.925833,-0.16708334)
\curveto(4.9558334,-0.17208333)(4.9958334,-0.20208333)(5.005833,-0.22708334)
\curveto(5.0158334,-0.25208333)(5.0358334,-0.29708335)(5.045833,-0.31708333)
\curveto(5.0558333,-0.33708334)(5.0858335,-0.36708334)(5.1058335,-0.37708333)
\curveto(5.1258335,-0.38708332)(5.170833,-0.40208334)(5.195833,-0.40708333)
\curveto(5.2208333,-0.41208333)(5.2658334,-0.43708333)(5.2858334,-0.45708334)
\curveto(5.3058333,-0.47708333)(5.3358335,-0.51708335)(5.3458333,-0.5370833)
\curveto(5.3558335,-0.5570833)(5.380833,-0.59708333)(5.3958335,-0.6170833)
\curveto(5.4108334,-0.63708335)(5.4358335,-0.6720833)(5.445833,-0.68708336)
\curveto(5.4558334,-0.70208335)(5.570833,-0.6620833)(5.675833,-0.6070833)
\curveto(5.7808332,-0.5520833)(5.8958335,-0.5420833)(5.9058332,-0.58708334)
\curveto(5.9158335,-0.63208336)(5.9308333,-0.69708335)(5.9358335,-0.71708333)
\curveto(5.940833,-0.7370833)(5.9608335,-0.77708334)(5.9758334,-0.7970833)
\curveto(5.9908333,-0.81708336)(6.025833,-0.84708333)(6.045833,-0.8570833)
\curveto(6.065833,-0.8670833)(6.1208334,-0.88708335)(6.1558332,-0.89708334)
\curveto(6.190833,-0.90708333)(6.235833,-0.94208336)(6.2458334,-0.96708333)
\curveto(6.255833,-0.9920833)(6.2708335,-1.0420834)(6.275833,-1.0670834)
\curveto(6.2808332,-1.0920833)(6.2858334,-1.1420833)(6.2858334,-1.1670834)
\curveto(6.2858334,-1.1920834)(6.3258333,-1.2270833)(6.3658333,-1.2370833)
\curveto(6.4058332,-1.2470833)(6.4508333,-1.2870834)(6.4558334,-1.3170834)
\curveto(6.4608335,-1.3470833)(6.465833,-1.4120834)(6.465833,-1.4470834)
\curveto(6.465833,-1.4820833)(6.465833,-1.5420834)(6.465833,-1.5670834)
\curveto(6.465833,-1.5920833)(6.4808335,-1.6370833)(6.4958334,-1.6570834)
\curveto(6.5108333,-1.6770834)(6.5408335,-1.7070833)(6.5558333,-1.7170833)
\curveto(6.570833,-1.7270833)(6.6008334,-1.7620833)(6.6158333,-1.7870834)
\curveto(6.630833,-1.8120834)(6.6558332,-1.8520833)(6.6658335,-1.8670833)
\curveto(6.675833,-1.8820833)(6.6858335,-1.9220834)(6.6858335,-1.9470834)
\curveto(6.6858335,-1.9720833)(6.690833,-2.0220833)(6.695833,-2.0470834)
\curveto(6.7008333,-2.0720832)(6.7208333,-2.1120834)(6.735833,-2.1270833)
\curveto(6.7508335,-2.1420834)(6.7808332,-2.1670833)(6.8258333,-2.1970832)
}
\psdots[dotsize=0.12](4.6258335,-0.13708334)
\psdots[dotsize=0.12](7.6658335,-1.1370833)
\pscustom[linewidth=0.04]
{
\newpath
\moveto(6.8258333,-2.1970832)
\lineto(6.8658333,-2.2770834)
\curveto(6.8858333,-2.3170834)(6.925833,-2.3620834)(6.945833,-2.3670833)
\curveto(6.965833,-2.3720834)(6.9958334,-2.3620834)(7.005833,-2.3470833)
\curveto(7.0158334,-2.3320832)(7.0308332,-2.2770834)(7.0358334,-2.2370834)
\curveto(7.0408335,-2.1970832)(7.045833,-2.1270833)(7.045833,-2.0970833)
\curveto(7.045833,-2.0670834)(7.0558333,-2.0070834)(7.065833,-1.9770833)
\curveto(7.0758333,-1.9470834)(7.090833,-1.9020833)(7.0958333,-1.8870833)
\curveto(7.1008334,-1.8720833)(7.1258335,-1.8720833)(7.1458335,-1.8870833)
\curveto(7.1658335,-1.9020833)(7.195833,-1.8970833)(7.2058334,-1.8770833)
\curveto(7.215833,-1.8570833)(7.2608333,-1.8070834)(7.295833,-1.7770833)
\curveto(7.3308334,-1.7470833)(7.3708334,-1.6870834)(7.3758335,-1.6570834)
\curveto(7.380833,-1.6270833)(7.3958335,-1.5670834)(7.4058332,-1.5370834)
\curveto(7.4158335,-1.5070833)(7.4308333,-1.4570833)(7.4358335,-1.4370834)
\curveto(7.440833,-1.4170834)(7.4608335,-1.3720833)(7.4758334,-1.3470833)
\curveto(7.4908333,-1.3220834)(7.5208335,-1.2970834)(7.565833,-1.2970834)
}
\pscustom[linewidth=0.04]
{
\newpath
\moveto(7.565833,-1.2770833)
\lineto(7.6058335,-1.2670833)
\curveto(7.6258335,-1.2620833)(7.6558332,-1.2270833)(7.6858335,-1.1370833)
}
\pscustom[linewidth=0.04]
{
\newpath
\moveto(7.6858335,-1.1370833)
\lineto(7.715833,-1.0870833)
\curveto(7.7308335,-1.0620834)(7.7608333,-1.0120833)(7.775833,-0.9870833)
\curveto(7.7908335,-0.96208334)(7.8308334,-0.9270833)(7.8558335,-0.9170833)
\curveto(7.880833,-0.90708333)(7.940833,-0.8720833)(7.9758334,-0.84708333)
\curveto(8.010834,-0.82208335)(8.115833,-0.8020833)(8.185833,-0.8070833)
\curveto(8.255834,-0.81208336)(8.340834,-0.83708334)(8.355833,-0.8570833)
\curveto(8.370833,-0.87708336)(8.405833,-0.90208334)(8.425834,-0.90708333)
\curveto(8.445833,-0.9120833)(8.485833,-0.89208335)(8.505834,-0.8670833)
\curveto(8.525833,-0.84208333)(8.550834,-0.7970833)(8.555833,-0.77708334)
\curveto(8.560833,-0.75708336)(8.580833,-0.71208334)(8.595834,-0.68708336)
\curveto(8.610833,-0.6620833)(8.655833,-0.6120833)(8.685833,-0.58708334)
\curveto(8.715834,-0.56208336)(8.780833,-0.5520833)(8.815833,-0.56708336)
\curveto(8.850833,-0.58208334)(8.915833,-0.6220833)(8.945833,-0.64708334)
\curveto(8.975833,-0.6720833)(9.025833,-0.71208334)(9.045834,-0.7270833)
\curveto(9.065833,-0.7420833)(9.105833,-0.77208334)(9.1258335,-0.7870833)
\curveto(9.145833,-0.8020833)(9.200833,-0.7920833)(9.235833,-0.76708335)
\curveto(9.270833,-0.7420833)(9.320833,-0.68708336)(9.335834,-0.65708333)
\curveto(9.350833,-0.62708336)(9.395833,-0.56708336)(9.425834,-0.5370833)
\curveto(9.455833,-0.50708336)(9.510834,-0.44208333)(9.535833,-0.40708333)
\curveto(9.560833,-0.37208334)(9.595834,-0.31708333)(9.605833,-0.29708335)
\curveto(9.615833,-0.27708334)(9.630834,-0.23208334)(9.635834,-0.20708333)
\curveto(9.640833,-0.18208334)(9.645833,-0.14208333)(9.645833,-0.09708334)
}
\rput(0.5,-2.6){$0=t_1$} \rput(0.85,-3.32){$s_1^l$}
\rput(1.75,-2.6){$t_2$}
\rput(2.75,-2.6){$t_3$} \rput(2.75,-3.32){$s_1^r$} \rput(2.8,-4){$s_{11}^l$}
\rput(3.75,-2.6){$t_4$} 
  \rput(4.75,-3.32){$s_2^l$} \rput(4.8,-4){$s_{11}^r$}
\rput(6.75,-2.6){$t_5$} \rput(6.65,-3.1){$\overbrace{s_2^r=s_3^l}^{}$}
  \rput(7.75,-3.32){$s_3^m$}
\rput(9.75,-2.6){$t_6$} \rput(9.75,-3.32){$s_3^r$}
\rput(10.75,-2.6){$t_7$} \rput(11,-3.32){$T_{new}$}
\rput(12.18,-2.6){$t_8=T$}
\rput(14.18,-3.32){$1^{st}$ level} \rput(14.18,-4){$2^{nd}$ level}
\end{pspicture} 
}

\vspace{2cm}
\emph{step 3: rearranging} \\
Now we shift the (right-open) intervals as well as the functions $f,g$ and $h$: we write $s_j$ for the length of the interval $[s_j^l,s_j^r[$ and set
\begin{align*}
f(t) & \text{ on } [0,s_1[ \\
f(t-s_1+s_2^l) & \text{ on } [s_1,s_1+s_2[ \\
... & \\
f\left(t-\left(\sum_{j=1}^{k-1} s_j \right)+ s_k^l \right) & \text{ on } \left[ \sum_{j=1}^{k-1} s_j , \sum_{j=1}^k s_j \right[.
\end{align*}
on the right we continue with the intervals of the second hierarchical level, then of the third and so on. Finally we plug together the left intervals defined in step 2 and put them to the very right. Just the interval $[T_{new},T]$ remains unchanged. This results in a new function $\widetilde{f}$. $\widetilde{g}$ and $\widetilde{h}$ are defined analogously. In particular we still have $\widetilde{f}+\widetilde{g}=\widetilde{h}$. $\widetilde{g}$ is still pure jump, but $\widetilde{f}$ is not continuous. \\
\emph{step 4: not all hills can be made injective by adding jumps} \\
The function $\widetilde{f}$ has the following property: it can be written as ($p\in\bbn$)
\[
  \widetilde{f}=\left(\sum_{j=1}^{p} f^{(j)} \cdot 1_{[a^{(2j)},a^{(2j+2)}[}\right) + f^{(2p+2)}\cdot 1_{[a^{(2p+2)},T[}
\]
where $a^{(j+1)}>a^{(j)}$, $\widetilde{f}(a^{(j)})=\widetilde{f}(a^{(j+2)}-)$
and 
\[
  \sum_{j=1}^p \abs{\widetilde{f}(a^{(2j+2)}-)-\widetilde{f}(a^{(2j+1)})}+ 
               \abs{\widetilde{f}(a^{(2j+1)})-\widetilde{f}(a^{(2j)})} >4C_g
\]
($C_f$ does no longer appear, because of the last unfinished hill). We call every part of the function $f^{(j)} \cdot 1_{[a^{(2j)},a^{(2j+2)}[}$ a hill. $\widetilde{f}$ only jumps in points $a^{(2j)}$ $(j=1,...,p+1)$, i.e., between the hills. No matter how we divide the jumps $g$ to the intervals 
$[a^{(2j)},a^{(2j+2)}[$, we always get at least one interval where the requirements of Lemma \ref{lem:rearrange} are met. On this interval the function $\widetilde{h}=\widetilde{f}+\widetilde{g}$ in not injective. Since we have only rearranged $h$ in order to obtain $\widetilde{h}$, the original function $h$ in not injective, as well. Hence we obtain a contradiction. The function $h$ can not be of infinite variation and the theorem is proved.
\end{proof}


The proof of our main result is now relatively simple:

\begin{proof}[Proof of theorem \ref{thm:semimg}]
The if part is clear by the above Theorem. Now let the jumps of $X$ be locally absolutely summable. Fix $x\in\bbr$ and $T>0$. By Theorem \ref{thm:markovexp} we know what the paths of a Markov process look like. We exclude the trivial case $X_t^x=x$. By Lemma \ref{lem:separate} we can and will assume that the path $h(t):=X^x_t$ is injective on $[0,T]$ for a $T>0$. If this was not the case it would be sufficient to analyze the path up to time $t_1\leq T$ (cf. Remark \ref{def:tzero}) and then separate the interval into 
\[
[0,t_1-d], \ [t_1-d,t_1], [t_1,t_1+d], [t_1+d,t_1+2d],...,[t_1+nd,T]
\]
where $d:=(t_1-t_0)/2$. We divide by 2, because $h(t_1)=h(t_0)$. The result follows from Theorem \ref{thm:analysis}.
\end{proof}






\section{The Time Inhomogeneous Case}


For most of our results time homogeneity is a key ingredient and the results (or analogous of the results) do not hold for the time inhomogeneous case. However, complementing the results of Section 2 we have the following:

\begin{proposition}
Every function $f:[0,\infty[ \to \bbr^d$ is expandable to a space homogeneous Markov process (which is in general not time homogeneous).
\end{proposition}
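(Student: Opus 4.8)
The plan is to use space homogeneity to reduce the entire process to a single path and then observe that the Markov property becomes essentially vacuous. Space homogeneity \eqref{spacehom} forces $X_t^x=X_t^0+x$ for every $x\in\bbr^d$, and since $X_0^0=0$ this automatically yields the initial condition $X_0^x=x$. Hence the only freedom lies in the path $t\mapsto X_t^0$, and the expandability requirement $X_t^{f(0)}=f(t)$ combined with space homogeneity pins it down: one must have $X_t^0=f(t)-f(0)$.

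Accordingly I would simply set
\[
X_t^x:=x+f(t)-f(0),\qquad x\in\bbr^d,\ t\geq 0,
\]
and verify the conditions in turn. The initial value $X_0^x=x$ and the target $X_t^{f(0)}=f(t)$ are immediate from $f(0)-f(0)=0$, and space homogeneity is direct: $X_t^{x+z}=(x+z)+f(t)-f(0)=X_t^x+z$. The associated transition kernels are the Dirac masses $P_{s,t}(z,\cdot)=\delta_{z+f(t)-f(s)}$, which are Borel kernels for any $f$ whatsoever, so no regularity of $f$ is needed and the construction applies to \emph{every} function.

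The only point that calls for comment is the Markov property, and here the decisive observation is that the paths are pairwise parallel translates, so two distinct paths never meet at the same time: $X_s^x=X_s^y$ means $x+f(s)-f(0)=y+f(s)-f(0)$, i.e.\ $x=y$. The time-inhomogeneous Markov property --- if two paths agree at a time $s$ they agree for all later times --- therefore holds trivially, and through any point $z$ at any time $s$ there passes exactly one path, namely the one started at $x=z-f(s)+f(0)$, so the future is well defined.

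I expect no genuine obstacle: the construction is forced by space homogeneity, and the proof is a matter of bookkeeping. The substantive content is really the parenthetical caveat, which I would justify by noting that time homogeneity \eqref{timehom} fails as soon as $f$ returns to an earlier value without becoming periodic: if $f(s)=f(t)$ for some $s<t$ then $X_s^0=X_t^0$, and \eqref{timehom} would force $f(s+h)=f(t+h)$ for all $h\geq 0$, which need not hold.
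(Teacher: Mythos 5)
Your construction $X_t^x:=x+f(t)-f(0)$ is exactly the one the paper uses (its proof is the single line ``Just set $X_t^x = x+(f(t)-f(0))$''), and your verification of space homogeneity, the Markov property via non-intersecting parallel paths, and the failure of time homogeneity is correct. No issues.
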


\begin{proof}
Just set $X_t^x = x+(f(t)-f(0))$.
\end{proof}

The proposition says in particular, that \emph{every} function is expandable to a time inhomogeneous Markov process. The requirements on $f$ (cf. Theorem \ref{thm:markovexp}) are not needed. 

To find a criterion for a time inhomogeneous Markov process to be a semimartingale (cf. Section 4), which is better than Proposition \ref{prop:detsemimg} is hopeless. Just take $\Phi$ to be any function of unbounded variation plus the standard construction principle. Note that we can not use the space-time Markov process $(X_t,t)$ in order to establish an analogous result, since our result does only hold in one dimension. This is interesting from a philosophical point-of-view: in the literature one sometimes gets the impression that time inhomogeneous Markov processes are not of any interest since one can \emph{always} use the space-time Markov process in order to transfer results. Here we can see that this is not the case. Compare in this context example \ref{ex:space-time}.

\begin{remark}
The difference between the time homogeneous and the time inhomogeneous case can be emphasized by having a look at the following situation: two continuous paths $X^x$ and $X^y$ with $x<y$ hit each other for the first time at $t_0$. By the Markov property they have to `stick together' afterwards. 
In the \emph{time homogeneous} case this is only possible if $X^x$ is monotone increasing on $[0,t_0[$ and $X^y$ is monotone decreasing on $[0,t_0[$. Furthermore $X^x$ and $x^y$ are constant on $[t_0,\infty[$. Every path $X^z$ with ($x\leq z \leq y$) is known and if for any $w\in ]-\infty,x[\cup]y,\infty[$ there exists a $t_1 >0$ such that $X_{t_1}^w\in[x,y]$ we know how the path behaves on $[t_1,\infty[$. In the \emph{time inhomogeneous} case we do not know anything about the behavior of $X^x$ (resp. $X^y$) on $]t_0,\infty[$ or about any path $X_t^w$ with $w\in]-\infty, x[\cup ]y,\infty[$. The only thing we can say is that for every continuous path $X^z$ ($x\leq z \leq y$) we have $X^z=X^y$ on $[t_0,\infty[$.
\begin{center}
\includegraphics[width=5cm, angle=270]{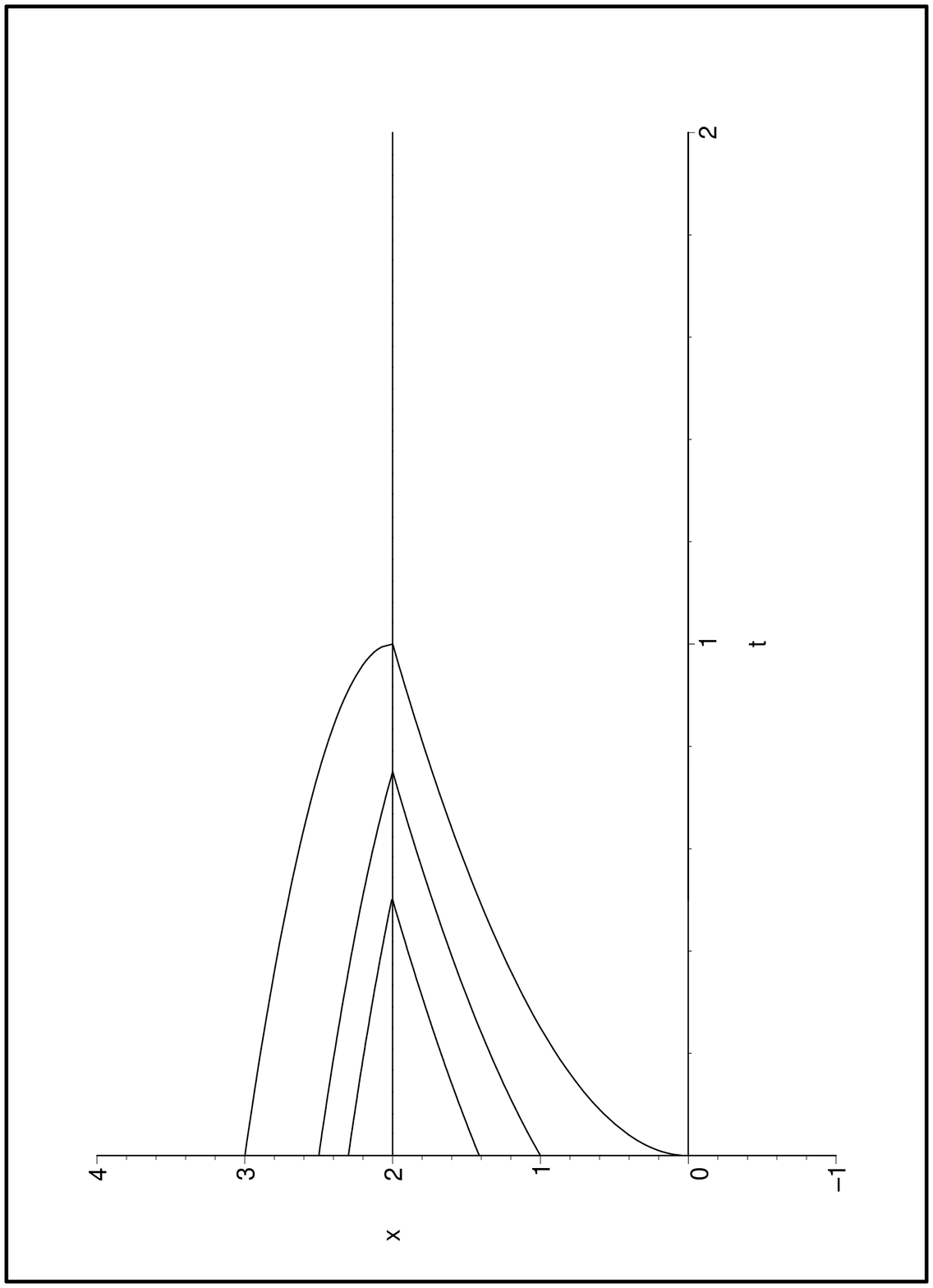}
\includegraphics[width=5cm, angle=270]{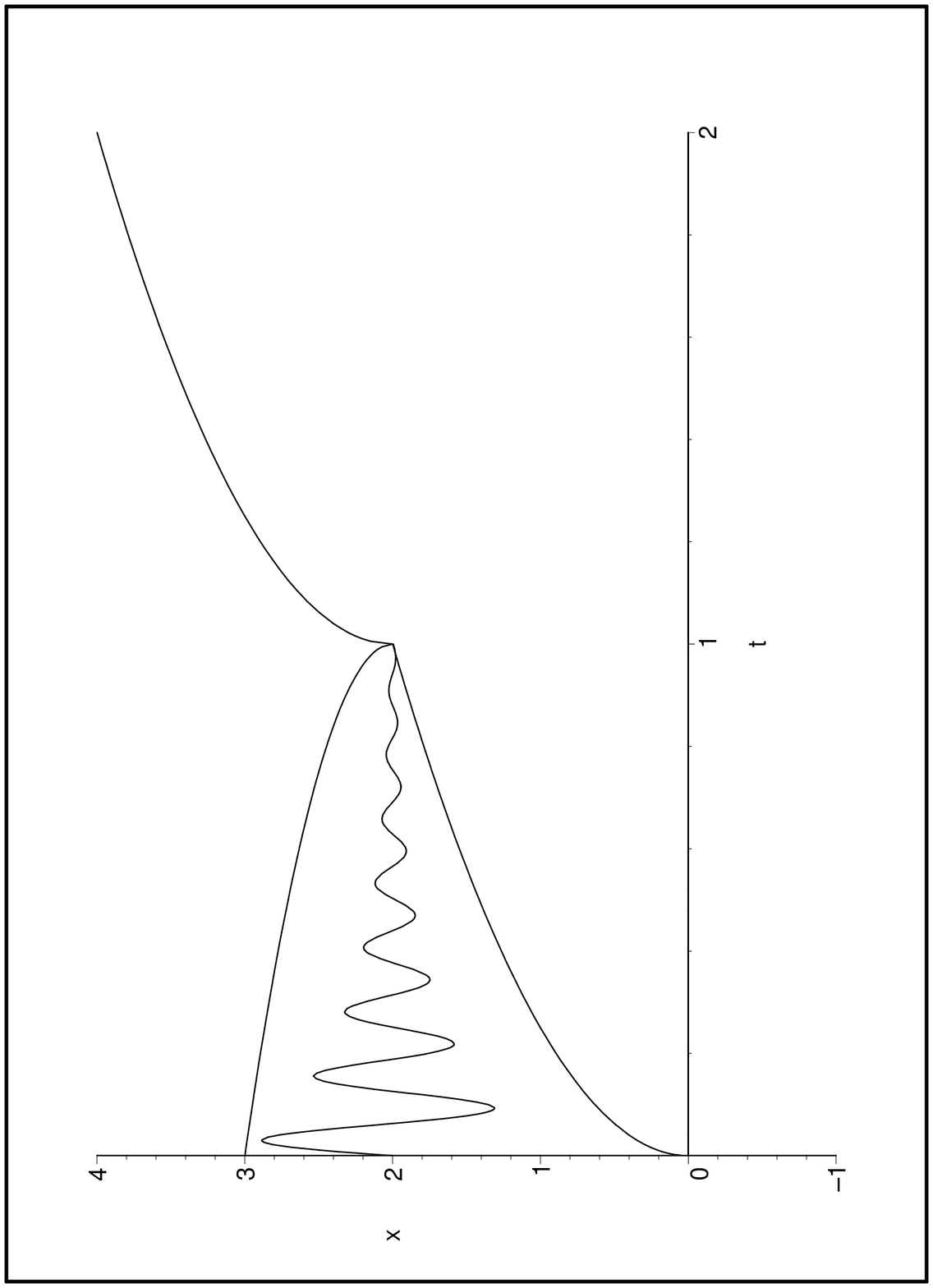}
\end{center}
\end{remark}

\hspace*{26mm} time homogeneous \hspace{33mm} time inhomogeneous

\section{Further Examples}

In this final section we have collected several examples which complement the results of the previous sections.

\begin{example} \label{ex:cantorprocess}
Let $h:\bbr\to [0,1]$ be the Cantor function (cf. \cite{cantorfunction} and \cite{elstrodt} Section 8.4) and define $g:[0,1]\to [0,1]$ by $g(y):= (1/2)(h(y)+y)$. For $x\in\bbr$ let $\Phi(x):=g(x-[x])+[x]$ with $g$ defined above and $x\mapsto[x]$ denoting the floor function. The stochastic process $X=(X_t)_{t\geq 0}$ defined by
\[
  X_t^x:=\Phi(t+\Phi^{-1}(x))
\]
is called \emph{Cantor process}. In \cite{detmp1} we have seen that this process is Feller, but not rich and that it is not an It\^o process, but a Hunt semimartingale.
\end{example}

\begin{example}\label{ex:bjoern}
Define $X^x_t=x\exp(t)$. The structure of this process is $\ominus|\odot|\oplus$, with the generating paths $t\mapsto \exp(t)$ respective $t\mapsto -\exp(t)$ on $]0,\infty[$ respective $]-\infty,0[$. Though the generating paths are diverging the process is still Feller.
\end{example}

\begin{example}\label{ex:killing}
A deterministic L\'evy process does not admit a killing and the only way how a killing can arise in the context of deterministic Hunt processes is by a state-space dependent drift which reaches infinity in finite time. If jumps are taken into account, we have two new ways of killing a process: first, we could start e.g. with a L\'evy process $X_t^x=x+ct$ ($c\neq 0$) and add a jump structure like
\[
  (a_n,b_n) =  \left(c\cdot \frac{2^n-1}{2^n}+\sum_{j=0}^{n-2} 2^j , c\cdot \frac{2^n-1}{2^n}+\sum_{j=0}^{n-1} 2^j \right).
\]
This results in a deterministic Hunt process which reaches infinity in finite time, if we start in
\[
  \bigcup_{n\in \bbn_0} \left[ 2^n + c\cdot \frac{2^n-1}{2^n}, 2^n + c\cdot \frac{2^{n+1}-1}{2^{n+1}} \right[.
\]
The second way is by adding an `instant killing', i.e. if the process reaches a certain set, it is sent directly to a cemetery state $\Delta$. \\
Deterministic processes which are strictly monotone increasing can be used as a time change in order to kill a given arbitrary process. 
\end{example}

\begin{example}\label{ex:notmarkov}
Let $X_t^x=x+(t\cdot (-x))=x\cdot (1-t)$. All paths are zero at time one, but the behavior afterward depends on the starting point $x$. This is a process which is not Markovian, but still a semimartingale and even a homogeneous diffusion in the sense of Definition III.2.18 of \cite{jacodshir} with respect to every starting point. Unlike Example \ref{ex:hdwjunif} the differential characteristic $\ell$ depends on the starting point here.
\end{example}



\begin{example} \label{ex:sum}
We show that the sum of two simple time homogeneous Markov processes starting in zero does not have to be again a time homogeneous Markov process (no matter how the filtrations are chosen): let $X_t:=t\cdot 1_{[0,1[}(t)+1_{[1,\infty[}(t)$ and $Y_t:=-t\cdot 1_{[0,2[}(t)-2\cdot 1_{[2,\infty[}(t)$. Their sum is 
\[
Z_t=X_t+Y_t=(1-t)\cdot 1_{[1,2[}(t)-1_{[2,\infty[}(t)
\]
contradicting time homogeneity. 
\end{example}

\begin{example}\label{ex:loopjumps}
The following path reaches zero (in the sense of left-limits) infinitely often, but in two different ways, which can not be described by means of monotonicity. Let
\[
X_t^1 := \sum_{n\in (2\bbn_0)} ((n+1)-t) 1_{A+n}(t) + (t-(n+1)) 1_{B+n}(t) + (t-(n+2)) 1_{A+(n+1)}(t) +((n+2)-t) 1_{B+(n+1)}(t) 
\]
with $A$ and $B$ as in Example \ref{ex:AtoD}:
\begin{center}
\setlength{\unitlength}{2cm}
\begin{picture}(4,2)
\put(0,0){\vector(0,1){2}}
\put(0,1){\vector(1,0){4.1}}
\linethickness{0.5mm}
\put(0,2){\line(1,-1){0.5}}
\put(0.5,0.5){\line(1,1){0.25}}
\put(0.75,1.25){\line(1,-1){0.125}}
\put(0.875,0.875){\line(1,1){0.0625}}
\put(0.9375,1.0625){\line(1,-1){0.03125}}
\put(0.96875,0.96875){\line(1,1){0.015625}}
\put(1,0.8){1}
\put(2,0.8){2}
\put(3,0.8){3}
\put(4,0.8){4}
\put(1,0){\line(1,1){0.5}}
\put(1.5,1.5){\line(1,-1){0.25}}
\put(1.75,0.75){\line(1,1){0.125}}
\put(1.875,1.125){\line(1,-1){0.0625}}
\put(1.9375,0.9375){\line(1,1){0.03125}}
\put(1.96875,1.03125){\line(1,-1){0.015625}}
\put(2,2){\line(1,-1){0.5}}
\put(2.5,0.5){\line(1,1){0.25}}
\put(2.75,1.25){\line(1,-1){0.125}}
\put(2.875,0.875){\line(1,1){0.0625}}
\put(2.9375,1.0625){\line(1,-1){0.03125}}
\put(2.96875,0.96875){\line(1,1){0.015625}}
\put(3,0){\line(1,1){0.5}}
\put(3.5,1.5){\line(1,-1){0.25}}
\put(3.75,0.75){\line(1,1){0.125}}
\put(3.875,1.125){\line(1,-1){0.0625}}
\put(3.9375,0.9375){\line(1,1){0.03125}}
\put(3.96875,1.03125){\line(1,-1){0.015625}}
\end{picture}
\end{center}
\end{example}

\begin{example}\label{ex:infinitelandingpoints}
For $n\in\bbn$ let
\[
X^{-n}_t := (t-n)1_{\left[ 0, \frac{2^n-1}{2^n}\right[} (t) + \sum_{j=n}^\infty \left( \frac{1}{2^j} t + \frac{2^j-1}{2^j} - \frac{n}{2^{2j+1}} \right) 1_{\left[\frac{2^j-1}{2^j} , \frac{2^{j+1}-1}{2^{j+1}} \right[} (t) +n 1_{[1,\infty[}(t)
\]
this results in infinitely many paths with $\lim_{t\uparrow 1} X_t^{-n}=0$ but $X_1^{-n}=n$.
\end{example}

\begin{example}\label{ex:space-time}
Here we consider the space-time Markov process. For this purpose let $X$ be a one-dimensional deterministic Markov process which is not homogeneous in time, i.e. there exist $s_1<t_1$ and $s_2<t_2$ such that $t_1-s_1=t_2-s_2$ and
\[
X_{s_1}^x=X_{s_2}^y \text{ but } X_{t_1}^x \neq X_{t_2}^y
\]
for starting points $x$ and $y$. It is a well-known fact that the corresponding space-time process $(X_t,t)'$ is homogeneous in time. This is due to the simple fact that the new paths do not coincide in $s_1$ and $s_2$. This does \emph{not} mean that the structure of the process becomes simpler. In the context of universal Markov processes it means that we get a whole new dimension of possible starting points. For these new starting points we get the paths by using the injective functions $\Phi^x(t):=(X_t^x,t)$ ($x\in\bbr$) as generating paths. This results in a process $Y$ on $\bbr\times [0,\infty[$ and makes things more complicated e.g. in the context of generators: assume that the paths of $X$ are continuously differentiable and write $f^x(t):=X_t^x$. The symbol of the generator of $Y$ is 
\[
p(y,\xi) = -i\xi_1 \partial^+ f((\Phi^x)^{-1} (y)) - i\xi_2
\]
where $x$ is a starting point in the original state space for which we have $y=X_{y_2}^x$. This might be the case for different $x$. However, for these different starting points the right-hand side derivatives in $(\Phi^x)^{-1} (y)$ do coincide by the Markov property. Therefore the above expression is well defined. 
If we considered the original process instead, we would have a family of generators with the time dependent symbol
\[
p_t(x,\eta) = -i\eta \partial^+ f(t).
\]
which appears more natural to us. 
This shows that it is sometimes worthwhile to consider the original process and that time inhomogeneous Markov processes have some value on their own right.
\end{example}

\section*{Acknowledgements}

Part of this work has been done while the author was visiting K. Bogdan (Wroclaw University of Technology) and Y. Xiao (Michigan State University). He would like to thank both professors and their institutes for the kind hospitality and for interesting discussions.  Financial support by the German Science Foundation (DFG) for the project SCHN1231/1-1 is gratefully acknowledged.


\end{document}